\journal{Elsevier}
\DeclareMathOperator{\Span}{span}
\newcommand\R{{\mathbb{R}}}
\def\ri{\mathrm{i}}
\newcommand{\eqnn}[1]{\begin{equation}\begin{split}#1\end{split}\nonumber\end{equation}}
\newcommand{\beq}{\begin{equation}}
\newcommand{\eeq}{\end{equation}}
\newcommand{\bea}{\begin{eqnarray}}
\newcommand{\eea}{\end{eqnarray}}
\newcommand{\eq}[2]{\begin{equation}\begin{split}#1\end{split}\label{#2}\end{equation}}
\newlength\imagewidth
\newtheorem{proposition}{Proposition}[section]
 \newtheorem{lemma}{Lemma}[section]
 \newtheorem{theorem}{Theorem}[section]
\newtheorem{remark}{Remark}[section]
\newcommand\LT{L^2(\mathbb{R})}
\newcommand\HT{H^1(\mathbb{R})}
\definecolor{dgreen}{rgb}{0,.6,0}
\numberwithin{equation}{section}
\begin{document}

\begin{frontmatter}
\title{Stability of 2-soliton solutions for the modified
Camassa-Holm equation with cubic nonlinearity }
\author{Xijun Deng}
\ead{xijundeng@yeah.net}
\address{Department of Mathematics,\\ Hubei University of Automotive Technology,
Shiyan, Hubei, 442002, P. R. China}
\author{St\'{e}phane Lafortune}
\ead{lafortunes@cofc.edu}
\address{Department of Mathematics,\\
College of Charleston,
 Charleston, SC 29401, USA }
\author{Zhisu Liu}
\ead{liuzhisu@cug.edu.cn}
\address{Center for Mathematical Sciences, School of Mathematics and Physics, \\ China University of Geosciences, Wuhan,
Hubei, 430074, P. R. China}

\begin{abstract}
 In this paper, we are concerned with the stability of {2-soliton solutions on a nonzero constant background} for the modified
Camassa-Holm equation with cubic nonlinearity. By employing conserved quantities in terms of the momentum variable $m$, we show that the 2-soliton, when regarded as a solution to the
initial-value problem for the modified Camassa-Holm equation, is nonlinearly stable
to perturbations with respect to the momentum variable in the Sobolev space $H^2$.
\end{abstract}

\begin{keyword}
 Modified Camassa-Holm equation; 2-soliton solutions; {Stability;} Conserved quantities
 \end{keyword}

\end{frontmatter}

\section{Introduction}
In this paper, we deal with the nonlinear stability of 2-soliton solutions for the modified Camassa-Holm (mCH) equation with cubic nonlinearity
\begin{equation} \label{1} m_t+\left((u^2-u_x^2)m\right)_x=0,\;\;m=u-u_{xx}, \end{equation}
where $u(x,t)$ is a real-valued function of space-time variables $(x, t)$, and the subscripts $x$ and $t$ appended to $m$ and $u$ denote partial differentiation. The mCH equation \eqref{1} was proposed as an integrable system by Fuchssteiner \cite[Equation (3.6)]{FU96} and Olver and Rosenau \cite[Equation (25)]{OR96} (see also \cite{FO95,FO952}) using an approach based on the idea of ``tri-Hamiltonian duality.'' This approach consists of a recombination of the Hamiltonian operators of a bi-Hamiltonian system that leads to integrable hierarchies endowed with nonlinear dispersion \cite{OR96}. When applied to the KdV, this approach of tri-Hamiltonian duality yields the Camassa-Holm equation, while \eqref{1} is obtained from the mKdV. Equation \eqref{1} is integrable in the sense that it has a bi-Hamiltonian structure \cite{FU96,OR96,Qiao06} and it admits a Lax pair \cite{Qiao06}. Later, the generalization of \eqref{1} with a dispersive term given below in \eqref{2} was obtained by Qiao \cite{Qiao11} from the two-dimensional Euler equations, with the variable $u$ representing the velocity of the fluid. Qiao also obtained the bi-Hamiltonian structure together with the Lax pair.

In \cite{LLZ24},  Li, Liu, and Zhu investigated the stability of single soliton solutions for the mCH equation with a linear dispersion term, given by
\begin{equation}\label{2}
 m_t + ((u^2 - u_x^2)m)_x + \gamma u_x = 0, \,\,\,\, m = u - u_{xx},
\end{equation}
where the constant $\gamma > 0$ is the linear dispersion parameter. Unlike the mCH equation \eqref{1}, which does not admit smooth traveling wave solutions with vanishing boundary conditions (\cite[Theorem 6.1]{Fu2013} and \cite[Section II.B]{Qiao06}), Equation \eqref{2} admits smooth soliton solutions that decay at infinity \cite{M14, LL21}. By constructing conserved quantities in terms of the momentum variable $m$, it was shown in \cite{LLZ24} that the smooth single soliton of \eqref{2} is orbitally stable under perturbations to $m$ in the Sobolev space $H^3$. In an earlier work \cite{LL21}, orbital stability was established for the same soliton under perturbations to $u$ in $W^{1,4}(\mathbb{R}) \cap H^1(\mathbb{R})$. However, that result applies only to a restricted set of wave speeds \cite[Theorem 1.2]{LL21}. More recently, Li, Liu, and Zhu \cite{LLZ25} extended their analysis to the nonlinear stability of 2-soliton solutions for \eqref{2}. By introducing higher-order conserved quantities in terms of $m$, they showed that the stability of 2-soliton solutions can be established using the constrained minimization framework of \cite{MS93}. This approach has been successfully applied to several other 2-soliton problems, where the dynamics decouple into well-separated solitons over time; see, for example, \cite{HPZ93, KP07, NL06, LW20} and references therein.

As far as stability analysis for the mCH equation \eqref{1} is concerned, the orbital stability of the smooth 1-soliton was established in \cite{DLL25} for perturbations of the momentum variable in $\HT$, using a Lyapunov functional expressed in terms of $m$.
Furthermore, the stability of peakon solutions was demonstrated in \cite{QLL13}.

In addition to one-soliton and one-peakon solutions, the mCH equation \eqref{1} admits several other classes of solutions. On a zero background, it supports multi-peakon solutions \cite{GL13}. On a nonzero background, \eqref{1} admits smooth dark \cite{IL12} and bright \cite{M13} multi-solitons. Solutions to the mCH equation have also been derived via symmetry analysis in \cite{Bies2012}. The bright solitons, which are the focus of this article, can be represented parametrically, similar to the one-soliton solutions \cite{M13}. Furthermore, due to the integrability of the equation, multi-solitons interact elastically during evolution, with no dispersive effects observed at infinity.

Inspired by the works in \cite{HL13, LP22,  EJL24, LLZ25}, we aim to study the stability of 2-soliton solutions of the mCH equation \eqref{1} using a {{Hamiltonian}} structure, written in the variable $m$, and additional conserved quantities. Our general approach is to follow the seminal work presented in \cite{MS93}. More precisely, we first describe the parametrization and asymptotic behavior of the 2-soliton solutions of the mCH equation \eqref{1}. We then demonstrate that such 2-soliton solutions can be considered as critical points of an appropriate action functional expressed in the variable $m$, which is a linear combination of conserved quantities. Next, we aim to determine conditions to ensure that such 2-soliton profiles indeed serve as constrained local minima of the Hessian of the associated action functional. The specific strategy is to show
that such 2-soliton solutions are stable in the constrained space, provided that a constrained {second-order condition \cite{He66}} is satisfied.  Finally, we verify that this constrained second-order condition always holds, indicating that such 2-soliton solutions of the mCH equation \eqref{1} are stable, a consequence of the general stability result established in \cite{MS93}.

Notice that there exist no smooth solitons for the mCH equation \eqref{1} with vanishing boundary condition. Therefore, we consider the solutions of \eqref{1} on a nonzero constant background with $m(t,x)\rightarrow \kappa$ as $x\rightarrow \pm \infty$. Moreover, for fixed $\kappa>0$, we consider the class of functions in the set
\eq{
X_\kappa:=\{m-\kappa\in H^2(\mathbb{R}):m(x)>0,\,x\in \mathbb{R}\}.
}{Xk}
The four conserved integrals we are using to construct the Lyapunov functional are given by
\eq{
 E_1(m)&=\int_{\mathbb{R}}(m-\kappa)\,dx,
\\
E_2(m)&=\int_{\mathbb{R}}\left(\frac{1}{m}-\frac{1}{\kappa}\right)\,dx,
\\E_3(m)&=\int_{\mathbb{R}}\left(\frac{m_x^2}{m^5}+\frac{1}{4m^3}-\frac{1}{4\kappa^3}\right)\,dx,\\
E_4(m)&=\int_{\mathbb{R}}\left(\frac{m_{xx}^2}{2m^7}+\frac{5m_x^2}{4m^7}-\frac{7m_x^4}{2m^9}+\frac{1}{16m^5}-\frac{1}{16\kappa^5}\right)\,dx.
}{7}

The quantity $E_1$ is conserved as a direct consequence of the mCH \eqref{1}, which expresses $m_t$ as the derivative of an expression with respect to $x$. The conserved integrals $E_2$, $E_3$, and $E_4$ were derived in \cite{M13} using a B\"{a}cklund transformation, and the quantities $E_2$ and $E_3$ were identified as two Casimirs for the Hamiltonian structure associated to \eqref{1} in \cite{DLL25}. In addition to that, they were both shown to be conserved by direct computation of their time derivatives. {{In \ref{AA}}}, we explicitly show that $E_4$ is conserved.

It is noted that we used the three conserved functionals $E_1$, $E_2$, and $E_3$ in \cite{DLL25} to establish the following $H^1$-orbital stability of one-soliton of the mCH equation \eqref{1}.

\begin{proposition}\label{pro1.1} For fixed $c>0$, and $\kappa\in\left(\frac{\sqrt{c}}{3}, \frac{\sqrt{3c}}{3}\right)$, there exists a unique smooth solitary wave $m(t, x) = \mu(x - ct)$ of the mCH equation \eqref{1}. This solitary wave $\mu(x - ct)$ is orbitally stable with respect to perturbations in $\HT$, namely, for every $\varepsilon > 0$ there exists $\delta = \delta(\varepsilon) > 0$ such that for every $m_0$ such that $m_0-\kappa \in \HT$ satisfying $\|m_0 - \mu(\cdot)\|_{H^1} < \delta$, there exists a unique solution $m \in C^0(\mathbb{R}, X_\kappa)$ of the mCH equation \eqref{1} with initial datum $m(0, \cdot) = m_0$ and maximal existence time $T > 0$ satisfying
$${
\sup_{t\in [0,T)}\left\{\inf_{r\in \mathbb{R}}\left\{\|m(t,\cdot)-\mu(\cdot+r)\|_{H^1}\right\}\right\}<\varepsilon.}
$$
\end{proposition}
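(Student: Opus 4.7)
The plan is to follow the classical Lyapunov functional framework for orbital stability, adapted to the momentum variable $m$ rather than $u$, which is the strategy carried out in \cite{DLL25}. First, I would establish existence and uniqueness of the smooth profile $\mu(x-ct)$ for $\kappa$ in the indicated window by analysing the traveling-wave ODE obtained from \eqref{1}: substituting $m(t,x)=\mu(x-ct)$ and integrating once, using $\mu\to\kappa$ at infinity to fix the constant, reduces the problem to a planar system in $(\mu,\mu')$. A phase-plane analysis then shows that a homoclinic trajectory to $(\kappa,0)$ with $\mu>0$ exists precisely when $\kappa\in(\sqrt{c}/3,\sqrt{3c}/3)$, and that the positivity $\mu>0$ persists along the whole orbit.

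Next, I would look for a Lyapunov functional of the form
\[
\Lambda_c(m):=E_3(m)+\alpha(c,\kappa)\,E_2(m)+\beta(c,\kappa)\,E_1(m),
\]
with the coefficients $\alpha,\beta$ chosen so that $\Lambda_c'(\mu)=0$. The Euler--Lagrange equation of $\Lambda_c$, written out using \eqref{7}, must coincide (up to integration constants) with the traveling-wave ODE for $\mu$, and matching the constants determines $\alpha,\beta$ uniquely in terms of $c$ and $\kappa$. Since $E_1,E_2,E_3$ are all conserved along the flow of \eqref{1}, so is $\Lambda_c$. For a solution $m(t,\cdot)$ close to some translate $\mu(\cdot+r)$, Taylor expansion then gives
\[
\Lambda_c(m(t))-\Lambda_c(\mu)=\tfrac{1}{2}\bigl\langle\mathcal{L}\,v,v\bigr\rangle+o\!\left(\|v\|_{H^1}^2\right),\qquad \mathcal{L}:=\Lambda_c''(\mu),
\]
with $v:=m(t,\cdot)-\mu(\cdot+r)$, and the left-hand side is controlled by the initial distance $\|m_0-\mu\|_{H^1}$.

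The core step is the spectral analysis of the self-adjoint operator $\mathcal{L}$ on $L^2(\mathbb{R})$. I would show that $\mathcal{L}$ is a Sturm--Liouville operator whose principal part is of the form $-\partial_x(a(\mu,\mu_x)\partial_x\cdot)$, with weight $a\sim\mu^{-5}$ inherited from the $m_x^2/m^5$ term in $E_3$, so that its quadratic form controls $\|v_x\|_{L^2}^2+\|v\|_{L^2}^2$. The remaining structural information to be extracted is: (i) $\ker\mathcal{L}=\mathrm{span}\{\mu'\}$ by translation invariance of $\Lambda_c$; (ii) $\mathcal{L}$ has exactly one negative eigenvalue (the instability direction coming from the ``soliton amplitude'' mode); (iii) the essential spectrum is bounded away from zero, as follows from $a(\kappa,0)>0$ and the decay $\mu\to\kappa$. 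Combined with a scalar slope condition of Vakhitov--Kolokolov type on $c\mapsto E_j(\mu_c)$, standard Grillakis--Shatah--Strauss type arguments then yield the coercivity $\langle\mathcal{L}v,v\rangle\gtrsim\|v\|_{H^1}^2$ on the subspace orthogonal to $\mu'$; choosing $r(t)$ by the usual modulation ansatz to kill the neutral direction and combining with conservation of $\Lambda_c$ delivers the stated stability in $H^1$.

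The main obstacle, I expect, lies in this spectral step. The weights $\mu^{-5},\mu^{-3}$ in $E_3$ give $\mathcal{L}$ a non-standard symbol, so counting negative eigenvalues and pinning down the kernel cannot be read off from a textbook Schr\"odinger operator; it requires either an explicit change of variables bringing $\mathcal{L}$ into a canonical form or an indirect argument exploiting the integrable structure of \eqref{1}. Verifying the slope condition across the whole admissible interval $\kappa\in(\sqrt{c}/3,\sqrt{3c}/3)$ is a further nontrivial monotonicity check, since $\mu$ is given only parametrically and the relevant functionals cannot generally be computed in closed form.
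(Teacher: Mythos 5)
This proposition is not proved in the present paper: it is quoted from the authors' earlier work \cite{DLL25}, whose method the paper explicitly describes as building a Lyapunov functional from the three conserved quantities $E_1$, $E_2$, $E_3$ in the momentum variable $m$ (the same scheme reappears in Sections 3--5 here for the 2-soliton, including the homoclinic/phase-plane characterization of $\mu_c$ in Step 1 of Lemma \ref{le3.1} and the Maddocks--Sachs negative-eigenvalue count). Your outline --- the combination $E_3+\alpha E_2+\beta E_1$ chosen so that $\mu_c$ is a critical point, spectral analysis of the second variation with a one-dimensional kernel spanned by $\mu_c'$, one negative eigenvalue, and a slope condition yielding constrained coercivity --- is exactly that approach, so it matches the proof the paper relies on.
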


Furthermore, the local well-posedness result and the properties of solutions for the Cauchy problem associated with the mCH
equation \eqref{1} on the line were established in \cite{GL13}.
\begin{proposition} \label{pro1.2}
Let $u_0\in H^s(\mathbb{R})$ with $s>\frac{5}{2}$. Then there exists a time $T>0$ such that the initial value problem \eqref{2.1} has a unique solution $u\in C([0,T),H^s(\mathbb{R}))\cap C^1([0,T),H^{s-1}(\mathbb{R}))$.
Moreover, the solution $u$ depends continuously on the initial data, and if $m_0=(1-\partial_x^2)u_0$ does not change sign, then $m(t,x)$ will not change sign for any $t\in[0,T)$. More precisely, if $m_0(x)>0$, then the corresponding solution $u(t,x)$ is positive and satisfies $|u_x(t,x)|\leq u(t,x)$ for $(t,x)\in [0,T)\times \mathbb{R}$.
\end{proposition}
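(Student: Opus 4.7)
The plan is to apply Kato's semigroup framework for quasilinear evolution equations to the $u$-formulation of \eqref{1}, and then exploit the transport-with-source structure for $m$ to obtain both sign preservation and the pointwise inequality $|u_x|\leq u$.

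First, I would convert \eqref{1} into a nonlocal transport equation for $u$. Inverting the Helmholtz operator $1-\di_x^2$ with the kernel $p(x)=\tfrac{1}{2}e^{-|x|}$, one rewrites \eqref{1} in the equivalent form
\begin{equation*}
u_t+(u^2-u_x^2)u_x=-p*\di_x G(u,u_x),
\end{equation*}
where $G$ is polynomial in $u$ and $u_x$ only. This isolates the principal transport coefficient $u^2-u_x^2$ from a nonlocal remainder that depends on at most one spatial derivative of $u$, making the equation amenable to Kato's theorem.

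Second, I would verify the hypotheses of Kato's theorem for the quasilinear operator $A(u):=(u^2-u_x^2)\di_x$ acting on $H^{s-1}(\mathbb{R})$ with domain $H^s(\mathbb{R})$. The threshold $s>5/2$ appears because one needs $H^{s-1}\hookrightarrow C^1$ to control the coefficient in a Kato--Ponce type commutator estimate
\begin{equation*}
\|[\Lambda^s,(u^2-u_x^2)\di_x]v\|_{L^2}\leq C(\|u\|_{H^s})\|v\|_{H^s},\quad \Lambda:=(1-\di_x^2)^{1/2}.
\end{equation*}
Together with Lipschitz dependence of $A(u)$ on $u$ and Lipschitz continuity of the nonlocal source $p*\di_x G(u,u_x)$ on bounded subsets of $H^s$, this yields a unique local solution $u\in C([0,T),H^s)\cap C^1([0,T),H^{s-1})$ depending continuously on the initial datum. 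The commutator estimate is the main technical obstacle: it governs the regularity threshold and the closure of the $H^s$ energy estimate.

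Third, for the sign property of $m$, I would derive from \eqref{1} the transport-with-source equation
\begin{equation*}
m_t+(u^2-u_x^2)m_x+2u_x m^2=0,
\end{equation*}
and introduce the characteristic flow $q(t,\xi)$ determined by $q_t=(u^2-u_x^2)(t,q)$ with $q(0,\xi)=\xi$. Since $u^2-u_x^2$ is uniformly Lipschitz in $x$ on $[0,T_1]\times\mathbb{R}$ for every $T_1<T$ (as $H^{s-1}\hookrightarrow C^1$), $q(t,\cdot)$ is a $C^1$-diffeomorphism of $\mathbb{R}$. Dividing by $m^2$ along characteristics yields
\begin{equation*}
\frac{d}{dt}\frac{1}{m(t,q(t,\xi))}=2\,u_x(t,q(t,\xi)),
\end{equation*}
which integrates explicitly and shows that $m$ cannot change sign on $[0,T)$ provided $m_0$ does not. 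When $m_0>0$ this gives $m(t,\cdot)>0$, hence $u=p*m>0$; and since $|p'(x)|\leq p(x)$ pointwise, one concludes $|u_x|=|p'*m|\leq p*m=u$, completing the argument.
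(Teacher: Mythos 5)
The paper offers no proof of this proposition: it is quoted verbatim from Gui--Liu--Olver--Qu \cite{GL13} (see also \cite{Fu2013}), so there is nothing internal to compare against. Your sketch reproduces the standard argument from that reference: Kato's quasilinear semigroup theorem applied to the nonlocal form of \eqref{1} (with the caveat that the remainder is $-\di_x p*\bigl(\tfrac{2}{3}u^3+uu_x^2\bigr)-p*\bigl(\tfrac{1}{3}u_x^3\bigr)$, i.e.\ it also contains a term without the outer derivative, which is harmless since it is even smoother), the Kato--Ponce commutator estimate explaining the threshold $s>5/2$, and the characteristic flow of $u^2-u_x^2$ for the sign property, finishing with $|p'|\leq p$ to get $|u_x|\leq u$. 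This is the right route and I would accept it.

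One detail deserves tightening. Your derivation of sign preservation divides the equation $\frac{d}{dt}m(t,q(t,\xi))=-2u_x m^2$ by $m^2$, which is illegitimate at points $\xi$ where $m_0(\xi)=0$; the hypothesis ``$m_0$ does not change sign'' allows such zeros. Either invoke uniqueness for the scalar ODE $\dot{w}=-2u_x(t,q(t,\xi))\,w^2$ (locally Lipschitz right-hand side, so $w\equiv 0$ is the only solution through $w(0)=0$), or, better, use the conservation law along the flow,
\begin{equation*}
\frac{d}{dt}\bigl[m(t,q(t,\xi))\,q_\xi(t,\xi)\bigr]=0,\qquad\text{i.e.}\qquad m(t,q(t,\xi))\,q_\xi(t,\xi)=m_0(\xi),
\end{equation*}
which follows from $q_{\xi t}=2u_x m\,q_\xi$ and gives the sign statement (and the strict positivity $m>0$ when $m_0>0$) in one line, since $q_\xi>0$. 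With that replacement the argument is complete.
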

But more importantly for us is the following result on the global existence of solutions on a nonzero background \cite[Theorem 1.1]{Yang2022}.
\begin{proposition} \label{pro1.3}
Assume that the initial data $m(0) > 0$ and $m(0) -\kappa \in H^{2,1}(\R)\cap  H^{1,2}(\R)$ for some $\kappa>0$. Then there
exists a unique solution of the mCH \eqref{1} such that $m(t)-\kappa \in C\left([0, +\infty);H^{2,1}(\R)\cap  H^{1,2}(\R)\right)$.
\end{proposition}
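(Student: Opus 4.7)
The plan is to combine local well-posedness in a Sobolev space with high enough regularity with global a priori bounds derived from the conservation laws listed in \eqref{7} and from the Lax pair structure of the mCH equation. Concretely, I would set $v(t,x)=m(t,x)-\kappa$ and rewrite \eqref{1} as a quasilinear transport equation for $v$ of the form $v_t+A(v)v_x=F(v,v_x)$, where $A(v)=(u^2-u_x^2)$ is recovered from $v$ by the nonlocal operator $u=(1-\partial_x^2)^{-1}(v+\kappa)$ on the nonzero background. The linear operator $1-\partial_x^2$ is invertible on tempered distributions whose decay is controlled at infinity, so the nonlocal term is well defined when $v\in H^{2,1}(\R)\cap H^{1,2}(\R)$.

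First, I would establish local existence in $C([0,T^*);H^{2,1}\cap H^{1,2})$ for some $T^*>0$ by a standard Kato/Picard iteration, using that for $v$ in this class, $u$ is sufficiently smooth to apply an energy estimate to the quasilinear system for $(v,v_x,v_{xx})$. Proposition \ref{pro1.2} already provides local existence in $H^s$ for $s>5/2$ and, crucially, the propagation of sign for $m$: if $m_0>0$ then $m(t,\cdot)>0$ on the whole existence interval. This is essential because the conserved functionals $E_2,E_3,E_4$ in \eqref{7} all contain negative powers of $m$, so without positivity the bounds derived from them break down.

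Next, for the global extension I would combine the conservation laws with the integrable structure. The quantities $E_1,E_2,E_3,E_4$ in \eqref{7} give $t$-independent control of $\|v\|_{H^2}$ on the nonzero background, as long as $\inf_x m(t,x)$ stays bounded below by a positive constant; the latter would follow either from a pointwise argument using $|u_x|\le u$ (Proposition \ref{pro1.2}) together with the Lax-pair characterization of $m$, or from the conservation of $E_2$ (which controls $\int(1/m)$). To gain control of the weighted norms, I would exploit the Lax pair: letting $\Phi$ solve the spatial part of the Lax pair with spectral parameter $\lambda$, the corresponding Jost solutions are analytic in $\lambda$ in appropriate half-planes and their spatial decay translates precisely to weights on $m-\kappa$. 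Time-evolving the reflection coefficient is trivial (it only picks up a phase), so showing that the direct and inverse scattering maps are locally Lipschitz from $H^{2,1}\cap H^{1,2}$ into an appropriate Beals-Coifman function space, uniformly in $t$, yields the required global bound via a Deift--Zhou-type Riemann--Hilbert framework.

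The main obstacle I foresee is twofold. First, one must verify that the Riemann--Hilbert problem associated with mCH on the nonzero background $\kappa$ is uniquely solvable for all times and all reflection coefficients arising from initial data in $H^{2,1}\cap H^{1,2}$, which typically requires a small-norm argument or a vanishing-lemma argument to exclude spectral singularities on the jump contour. Second, the quantitative preservation of the weighted regularity $H^{2,1}\cap H^{1,2}$ under the nonlinear map (initial data) $\mapsto$ (solution at time $t$) is delicate: the weight $x$ interacts with the nonlocal operator $(1-\partial_x^2)^{-1}$ through commutator estimates, and one must confirm that these commutators do not produce unbounded growth in time. Once these two points are resolved, uniqueness is routine from the energy estimate applied to the difference of two solutions, and the continuous extension to all $t\ge 0$ follows by iterating the local argument using the uniform a priori bound.
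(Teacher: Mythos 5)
The paper does not prove Proposition \ref{pro1.3} at all: it is imported verbatim from \cite[Theorem 1.1]{Yang2022}, where it is established by the inverse scattering transform and a Riemann--Hilbert analysis on the nonzero background. So there is no in-paper proof to compare against; your sketch should be measured against the cited reference. On that score your outline is directionally aligned with \cite{Yang2022} (direct/inverse scattering maps between weighted Sobolev spaces and a Beals--Coifman/Deift--Zhou framework), but it is a plan rather than a proof: the two steps you yourself flag as obstacles --- unique solvability of the Riemann--Hilbert problem for all $t$ and all admissible reflection coefficients, and the quantitative propagation of the $H^{2,1}(\R)\cap H^{1,2}(\R)$ regularity through the scattering/inverse-scattering maps --- are precisely the substance of the cited theorem, and leaving them unresolved means the global statement is not actually established.

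Beyond incompleteness, two of your intermediate claims need repair. First, the assertion that a positive lower bound on $\inf_x m(t,x)$ ``would follow from the conservation of $E_2$'' is not justified: $E_2$ controls $\int(\tfrac{1}{m}-\tfrac{1}{\kappa})\,dx$, an integral quantity that does not yield a pointwise lower bound on $m$; and using $E_3$, $E_4$ to control $\|m-\kappa\|_{H^2}$ already presupposes such a lower (and upper) bound on $m$, so the argument as written is circular. The sign-propagation statement in Proposition \ref{pro1.2} gives $m>0$ on the local existence interval but not a uniform-in-time quantitative lower bound, which is what the conservation-law route would need. Second, Proposition \ref{pro1.2} is stated for decaying initial data $u_0\in H^s(\R)$; the paper explicitly treats its extension to the nonzero-background class $X_\kappa$ as an \emph{assumption} (see the paragraph preceding Theorem \ref{th1.1}), so you cannot invoke it off the shelf for $m(0)-\kappa\in H^{2,1}(\R)\cap H^{1,2}(\R)$ without addressing that point.
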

{\noindent}Here, by $H^{p,s}(\R)$, we mean \cite{Yang2022}
\eq{
H^{p,s}(\R)=\left\{f\in\LT \,\vert\, (1+x^2)^{s/2}\partial_x^jf(x)\in \LT,\;j=0,1,...,p\right\}.
}{Hps}

In \cite{M13}, Matsuno presented a compact parametric representation of the smooth $N$-soliton solution of the mCH equation \eqref{1}. Also, the properties of the one- and two-soliton solutions as well as the general
multi-soliton solutions were investigated in detail. It was demonstrated that one-soliton solutions with speed $c$ exists if and only if $3\kappa^2<c<9\kappa^2$. Moreover, {{Matsuno}} showed that, in the limit $t\rightarrow \infty$,
the asymptotic form of the $N$-solitons in the mCH equation \eqref{1} is a superposition of $N$ decoupled one-solitons.
 We shall denote a general $N$-soliton by
{$U^{(N)}(t,x; c_i,y_{i0})$}, where {each $N$-soliton profile}
 depends upon $N$ wave speeds $c_1, \ldots, c_N$, and $N$ phases $y_{10},\ldots , y_{N0}$.
 In particular, we denote {$U^{(1)} = U^{(1)}(t, x; c, y_{10})$ }as $c$-speed 1-soliton solution
 and $\phi=\phi_c$ only for the $c$-speed 1-soliton profile,
 so that {$U^{(1)} = U^{(1)}(t, x; c, y_{10})=\phi_c(x-ct-y_{10})$}. Furthermore, we denote $\mu_c=\phi_c-\partial_x^2\phi_c$ and $\tilde{\mu}=U^{(N)}-U_{xx}^{(N)}$. Thus, it was shown in \cite{M13} that $U^{(N)}$ can be represented as a sum of $N$ solitons with the following asymptotic behavior
 \eqnn{
 U^{(N)}\sim\sum_{i=1}^{N}\phi_{c_i}(x-c_it-{y_{i0}}) \quad as \quad t\rightarrow \infty,
}
where the phase parameter {$y_{i0}$} depends on the corresponding wave speeds $c_i$.

In this paper, we focus on the nonlinear stability of 2-soliton solution $U^{(2)}$
$(x,t;c_1,c_2,y_{10},y_{20})$ with speeds $3\kappa^2<c_2<c_1<9\kappa^2$, where $y_{10},y_{20}$ are phase parameters. The main result of this article is given in the following theorem. An assumption we make
is that Proposition \ref{pro1.2} can be extended to initial conditions with nonzero background thus for example on $X_\kappa$ as defined in \eqref{Xk}.  Under that assumption, we have the theorem, as we are going to show,
follows from the results of \cite{MS93}.

\begin{theorem}\label{th1.1}    The 2-soliton solution $\tilde{\mu}(t,x; c_1,c_2, y_{10}, y_{20})$ with speeds $3\kappa^2<c_2<c_1<9\kappa^2$ is stable in $X_\kappa$, namely, if for every $\varepsilon>0$ there exists $\delta=\delta(\varepsilon)>0$ such
that for every $m_0\in X_\kappa$ satisfying $\|m_0(\cdot)-\tilde{\mu}(0,\cdot;c_1,c_2, y_{10}, y_{20})\|_{H^2}<\delta$,
then {there exist} $y_{10}(t), y_{20}(t)\in \mathbb{R}$, such that $y_{10}(0)=y_{10}, y_{20}(0)=y_{20}$, and
the corresponding solution $m$ of the mCH equation \eqref{1} with the initial datum
$m(0, \cdot) = m_0$ exists up to the maximal existence time $T>0$ satisfying
\eq{
\sup_{t\in [0,T)}\left\{\|m(t,\cdot)-\tilde{\mu}(t,\cdot; c_1,c_2, y_{10}(t), y_{20}(t))\|_{H^2}\right\}<\varepsilon.
}{stabres}
\end{theorem}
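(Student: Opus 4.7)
The plan is to follow the constrained variational scheme of \cite{MS93}. Since $\tilde{\mu}(\cdot;c_1,c_2,y_{10},y_{20})$ depends on two wave speeds and two phase parameters, and since the flow of \eqref{1} possesses the four conserved integrals $E_1,E_2,E_3,E_4$ listed in \eqref{7}, the natural Lyapunov functional on $X_\kappa$ is
\[
V(m)=E_4(m)+\lambda_3 E_3(m)+\lambda_2 E_2(m)+\lambda_1 E_1(m),
\]
where $\lambda_i=\lambda_i(c_1,c_2,\kappa)$ are Lagrange multipliers to be determined. The translation invariance that generates the phases $y_{10},y_{20}$ is built into $V$ and requires no multiplier; instead, it forces the Hessian $V''(\tilde{\mu})$ to possess a two-dimensional kernel.

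First I would make precise the parametric representation of $\tilde{\mu}$ from \cite{M13}, verify that the integrands in \eqref{7} are finite on $X_\kappa$, and show that $E_4$ is conserved along the flow (the other three being treated in \cite{DLL25}); the former is the content of Section \ref{2.3}. Imposing $V'(\tilde{\mu})=0$ and comparing asymptotics as $t\to\pm\infty$, where $\tilde{\mu}$ decouples into two well-separated 1-soliton profiles $\mu_{c_1}$ and $\mu_{c_2}$, produces a linear system for $(\lambda_1,\lambda_2,\lambda_3)$ in terms of $c_1,c_2,\kappa$, which is solvable whenever $c_1\ne c_2$.

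The next step is a spectral analysis of the quadratic form $V''(\tilde{\mu})[\cdot,\cdot]$ on $H^2(\mathbb{R})$. Using the asymptotic decoupling once more, the associated self-adjoint operator degenerates in the limit $t\to\pm\infty$ into a direct sum of the two single-soliton Hessians built from the same multipliers. Each single-soliton Hessian, by the analysis underlying Proposition \ref{pro1.1} and \cite{DLL25}, is non-negative apart from exactly one simple negative eigenvalue and a one-dimensional translation kernel. A Weyl-type localization argument transfers this picture to $V''(\tilde{\mu}(t,\cdot))$ itself: exactly two simple negative eigenvalues, kernel spanned by $\partial_{y_{10}}\tilde{\mu}$ and $\partial_{y_{20}}\tilde{\mu}$, and positive essential spectrum bounded away from $0$, throughout the admissible range $3\kappa^2<c_2<c_1<9\kappa^2$.

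With this spectral picture in hand, the stability theorem of \cite{MS93} reduces the claim to a single second-order condition: that the $2\times 2$ matrix
\[
D(c_1,c_2)=\left(\frac{\partial E_{i+1}(\tilde{\mu})}{\partial c_j}\right)_{i,j=1,2}
\]
be negative definite, so that its signature matches the two-dimensional negative subspace of the Hessian. Exploiting asymptotic decoupling once more yields $E_i(\tilde{\mu})=E_i(\mu_{c_1})+E_i(\mu_{c_2})+(\text{background terms independent of } c_1,c_2)$, so $D$ is diagonal and its entries reduce to the one-soliton derivatives $\partial_c E_i(\mu_c)$ evaluated at $c=c_1$ and $c=c_2$. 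The main obstacle will be establishing the correct signs of these derivatives over the entire range $(3\kappa^2,9\kappa^2)$ directly from the parametric formulas of \cite{M13}; once this is carried out, the standard coercivity estimate for $V(m)-V(\tilde{\mu})$ modulo the two phase directions, combined with conservation of $V$ along \eqref{1} and the global well-posedness of Proposition \ref{pro1.3} extended to the setting of the theorem, delivers \eqref{stabres}.
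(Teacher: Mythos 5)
Your overall framework (a Lagrangian built from $E_1,\dots,E_4$, a two-dimensional translation kernel, and the Maddocks--Sachs second-order condition) is the same as the paper's, but your central spectral claim is wrong, and the error propagates fatally into the final step. You assert, by asymptotic decoupling, that the Hessian has exactly two negative eigenvalues, one inherited from each single-soliton Hessian. This heuristic fails because the Lagrange multipliers in the 2-soliton functional are shared between the two solitons and are determined jointly by the pair $(c_1,c_2)$: the operator localized near the soliton of speed $c_j$ is therefore \emph{not} the single-soliton Hessian underlying Proposition \ref{pro1.1}, whose natural multipliers would be the single-speed constants appearing in \eqref{G3G2G1}, and it need not contribute one negative direction. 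The paper settles the count rigorously via the Greenberg/Maddocks--Sachs oscillation theory (Lemma \ref{le4.4}): the number of negative eigenvalues equals the number of zeros of the Wronskian of the two kernel elements $\partial_{y_{10}}\tilde{\mu}$, $\partial_{y_{20}}\tilde{\mu}$, and an explicit computation in the three asymptotic regions shows that this Wronskian changes sign exactly once (Proposition \ref{pro4.1}). Hence $\mathcal{L}$ has \emph{one} negative eigenvalue, not two, and your proposal contains no mechanism capable of producing the correct count.

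The consequence is that your last step cannot close. With two negative eigenvalues you would need the $2\times 2$ multiplier Hessian to have two positive eigenvalues (Proposition \ref{pro5.1} matches the number of positive eigenvalues of $\partial^2F/\partial\lambda_i\partial\lambda_j$ with the number of negative eigenvalues of $\mathcal{L}$); the paper computes $\det M<0$ (Lemma \ref{le5.1}), so $M$ has signature $(1,1)$ and your route terminates in a contradiction rather than a proof. Moreover, your matrix $D=\bigl(\partial E_{i+1}(\tilde{\mu})/\partial c_j\bigr)$ is not the correct object: the Maddocks--Sachs matrix is $\partial^2F/\partial\lambda_i\partial\lambda_j$, which per \eqref{Mcalc} carries the Jacobian factor $\bigl(\partial\lambda/\partial c\bigr)^{-1}$ that changes the signature, so negative definiteness of $D$ is not the right condition (nor is $D$ diagonal: its $(i,j)$ entry is $\partial_c E_{i+1}(\mu_c)\vert_{c=c_j}$, so both rows are full). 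Finally, with three free multipliers your system $V'(\tilde{\mu})=0$ is underdetermined; the coefficient of $E_1$ is not free but is fixed by requiring the gradient to vanish on the background $m\equiv\kappa$ (this is why the paper works with $F_1,F_2,F_3$ and only two multipliers), and the multiplier Hessian must be $2\times 2$ for the count to make sense.
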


 \begin{remark} \label{re1.0}
We apply the global existence result from Proposition \ref{pro1.3} to reformulate Theorem \ref{th1.1}.
Specifically, we impose the additional conditions that {\( m_0 - \kappa \in H^{2,1}(\mathbb{R}) \cap H^{1,2}(\mathbb{R})
\subset H^2(\mathbb{R}) \) }and that \( m_0 > 0 \), where {\( H^{p,s}(\mathbb{R}) \)} is defined in \eqref{Hps}.
Under these assumptions, we may replace \( T \) with \( \infty \) in \eqref{stabres}.\end{remark}

 \begin{remark} \label{re1.1}
Note that $u=(1-\partial_{xx})^{-1}m$ and
$$
 \|m\|_{H^2}=\int_{\mathbb{R}}(1+x^2)^2\cdot|\hat{u}|\,dx,
$$
which is equivalent to the $H^4(\mathbb{R})$ norm on $u$. Thus it follows from Theorem \ref{th1.1} that if $u(t, x) =U^{(2)}(x,t;c_1,c_2,y_{10},y_{20})$ is the
2-soliton solution of Equation \eqref{1} with {$U^{(2)}\in Y_\kappa$} given by
{\eqnn{
Y_\kappa:=\{u-\kappa\in H^4(\mathbb{R}):u-u_{xx}>0, x\in \mathbb{R}\}
}}
then such 2-soliton solution is stable in {$Y_\kappa$} in the $H^4(\mathbb{R})$ norm.
\end{remark}

\begin{remark} \label{re1.2}
 The 2-soliton solutions studied in \cite{LLZ24} for Equation \eqref{2} are on a zero background, and thus are not related to 2-soliton solutions with non-zero background for the mCH equation \eqref{2} with $\gamma=0$ (i.e. equation \eqref{1}). Hence, our results  are not obtainable by simply taking the limit as $\gamma\to 0$.
 \end{remark}

The remainder of this paper is organized as follows. In Section \ref{2s}, we recall the parametrization and asymptotic behavior of 2-soliton solutions that will later be used in our stability proof.  In Section \ref{3s}, we show that such 2-soliton solutions can be viewed as critical points of an explicit action functional $F$. In Section \ref{4s} we provide analytic spectral properties of the operator $\mathcal{L}=\frac{\delta^2 F}{\delta^2 m}$ evaluated at the 2-soliton solutions, especially that the operator $\mathcal{L}$ has only one negative eigenvalue. Section \ref{5s} is devoted to the proof that the so-called constrained second-order condition is always satisfied. Section \ref{6s} is devoted to the analysis of stability as we complete the proof of the main result in Theorem \ref{th1.1}. {{Also, in \ref{AA}, we demonstrate that the integral $E_4$ defined \eqref{7} is conserved over time.}}

\section {Preliminary results}
\label{2s}
In this section, we first recall some preliminary results, which include the soliton parametrization results and asymptotic behavior of 2-soliton solutions for the mCH equation \eqref{1}.  It is assumed that the solution $m$ satisfies the properties necessary for the existence of the conserved integrals {{given in \eqref{7}}}.

\subsection {Soliton parametrization}\label{2.1s}
As described in \cite{M13}, one can first introduce the reciprocal transformation $(x, t)\rightarrow(y, \tau)$ by
\begin{equation}\label{2.1}
 dy=mdx-m(u^2-u_x^2)dt,\quad d\tau=dt,
\end{equation}
subjected to the restriction $m > 0$. By the application of this transformation, the mCH equation \eqref{1} is recast into the form
\begin{equation}\label{2.2}
m_{\tau}+2m^3u_y=0.
\end{equation}
Notice that from the relation $u_{xx}=m^2u_{yy}+m m_y u_y$, one can obtain the following expression for $u(y,\tau)$
\begin{equation}\label{2.3}
u=m+u_{xx}=m+\frac{1}{2}m\left(\frac{1}{m}\right)_{\tau y}.
\end{equation}
Define a new variable $p(y,\tau)$ by
\eqnn{
p=\frac{1}{m}
}
and substitute it into \eqref{2.2}, one obtains
the following equation for $p(y,\tau)$
\begin{equation}\label{2.5}
pp_{\tau yy}-p_yp_{\tau y}-p^3p_\tau-2p_y=0,
\end{equation}
which is called the associated mCH equation.

By using a B\"acklund transformation between
solutions of the associated mCH equation \eqref{2.5} and a model equation for shallow-water
waves, which is a member of the KdV hierarchy, expressions for $p$ and $u$ are found to be \cite{M13}
\begin{equation}\label{2.6}
p=p(y,\tau)=\frac{1}{\kappa}+2\left(\ln\!\left( \frac{g}{f}\right)\right)_{y}, \quad u=u(y,\tau)=\kappa-\left(\ln\!\left( fg\right)\right)_{\tau y},
\end{equation}
\begin{equation}\label{2.7}
x=x(y,\tau)=\frac{y}{\kappa}+\kappa^2 \tau+2\ln\!\left(\frac{g}{f}\right)+d,
\end{equation}
where $f$ and $g$ represent tau-functions satisfying a bilinear equation {related} to KdV and given in \cite[Equation (2.20)]{M13}, $d$ is an arbitrary constant. We now consider the following two cases.

(i) 1-soliton solution $U^{(1)}$.

The tau-functions $f$ and $g$ are given by
\eqnn{
f=1+e^{\xi}, \quad g=1+e^{\xi-\psi}
}
with
\eqnn{
\xi=k(y-\tilde{c}\tau+y_0),\quad \tilde{c}=\frac{2\kappa^3}{1-(\kappa k)^2},
}
\eqnn{
e^{-\psi}=\frac{1-\kappa k}{1+\kappa k}.
}
The parametric representation of the one-soliton solution can be written in the form
\begin{eqnarray*}
U^{(1)}&=& \kappa-\frac{(fg)(fg)_{\tau y}-(fg)_\tau(fg)_y}{(fg)^2}\\
&=&  \kappa+\frac{k^2\tilde{c}\sqrt{1-(\kappa k)^2}(\cosh (\xi-\xi_0)+\sqrt{1-(\kappa k)^2})}{\left(1+\sqrt{1-(\kappa k)^2}\cosh (\xi-\xi_0)\right)^2},
\end{eqnarray*}
\eqnn{
x=\frac{y}{\kappa}+\kappa^2\tau+2\ln\!\left(\frac{1-\kappa k\tanh\!\left(\frac{\xi}{2}\right)}{1+\kappa k}\right)+d,\quad \xi_0=\frac{1}{2}\ln\!\left(\frac{1+\kappa k}{1-\kappa k}\right).
}

(ii) 2-soliton solution $U^{(2)}$.

The tau-functions $f$ and $g$ are given by
\begin{equation}\label{2.8}
f=1+e^{\xi_1}+e^{\xi_2}+\left(\frac{k_1-k_2}{k_1+k_2}\right)^2e^{\xi_1+\xi_2},
\end{equation}
\begin{equation}\label{2.9}
g=1+e^{\xi_1-\psi_1}+e^{\xi_2-\psi_2}+\left(\frac{k_1-k_2}{k_1+k_2}\right)^2e^{\xi_1+\xi_2-\psi_1-\psi_2},
\end{equation}
where
\begin{equation}\label{2.10}
 \xi_i=k_i(y-\tilde{c_i}\tau+y_{i0}),\quad \tilde{c_i}=\frac{2\kappa^3}{1-(\kappa k_i)^2},\;i=1,2,
\end{equation}
\eqnn{
 e^{-\psi_i}=\frac{1-\kappa k_i}{1+\kappa k_i},\;i=1,2,
}
and $y_{i0}, i = 1, 2$ are arbitrary constants.

\subsection {Asymptotic behavior of 2-soliton solutions}
We discuss briefly the asymptotic behavior of the 2-soliton solutions. Let $c_i,\;i = 1, 2$ be the velocity of the $i$-th soliton in the $(x, t)$ coordinate system and assume that $0 < c_2
< c_1$ and impose the conditions $0<\kappa k_i<\frac{\sqrt{3}}{2}, i=1,2$. Then it follows from \eqref{2.7} that
\eqnn{
 c_i=\frac{\tilde{c_i}}{\kappa}+\kappa^2,\;i=1,2,
}
which implies $k_1>k_2>0$ due to \eqref{2.10}.

We first assume that $\xi_2=O(1)$. When $t\rightarrow +\infty$, in this
case $e^{\xi_1}$ will become exponentially small. Thus it follows from \eqref{2.6}, \eqref{2.7}, \eqref{2.8}, and \eqref{2.9} that
\begin{equation}\label{2.13}
U^{(2)}=\kappa+\frac{k_2^2\tilde{c_2}\sqrt{1-(\kappa k_2)^2}(\cosh (\xi_2-\xi_{20})+\sqrt{1-(\kappa k_2)^2})}{\left(1+\sqrt{1-(\kappa k_2)^2}\cosh (\xi_2-\xi_{20})\right)^2}+e.s.t.,
\end{equation}
 \begin{equation}\label{2.14}
x-c_2t=\frac{\xi_2}{\kappa k_2}+2\ln\!\left(\frac{1-\kappa k_2\tanh\left(\frac{\xi_2}{2}\right)}{1+\kappa k_2}\right) +e.s.t.,
\end{equation}
where
 \eqnn{
\xi_{20}=\frac{1}{2}\ln\!\left(\frac{1+\kappa k_2}{1-\kappa k_2}\right),\quad c_2=\frac{\tilde{c_2}}{\kappa}+\kappa^2=\frac{\kappa^2[3-(\kappa k_2)^2]}{1-(\kappa k_2)^2},
}
and $e.s.t.$ means exponentially small terms that rapidly approach zero.

Similarly, assume that $\xi_1=O(1)$.  As $t\rightarrow +\infty$, $e^{-\xi_2}$ will become exponentially small. The expressions corresponding to \eqref{2.13}--\eqref{2.14} read
\eqnn{
U^{(2)}=\kappa+\frac{k_1^2\tilde{c_1}\sqrt{1-(\kappa k_1)^2}(\cosh (\xi_1-\xi_{10}+\delta_1^{+})+\sqrt{1-(\kappa k_1)^2})}{\left(1+\sqrt{1-(\kappa k_1)^2}\cosh (\xi_1-\xi_{10}+\delta_1^{+})\right)^2}+e.s.t.,
}
 \eqnn{
x-c_1t=\frac{\xi_1}{\kappa k_1}+2\ln\!\left(\frac{1-\kappa k_1\tanh\left(\frac{\xi_1+\delta_1^{+}}{2}\right)}{1+\kappa k_1}\right)-2\psi_2 +e.s.t.,
}
where
 \eqnn{
\xi_{10}=\frac{1}{2}\ln\!\left(\frac{1+\kappa k_1}{1-\kappa k_1}\right),\quad c_1=\frac{\tilde{c_1}}{\kappa}+\kappa^2=\frac{\kappa^2[3-(\kappa k_1)^2]}{1-(\kappa k_1)^2},
}
\eqnn{
\delta_1^{+}=2\ln\!\left(\frac{ k_1-k_2}{k_1+k_2}\right).
}

In the same way, one can show that, except in these two regions, $U^{(2)}$ becomes
exponentially small as $t\rightarrow +\infty$. Thus, the above analysis shows that the long-time asymptotics of a 2-soliton
solution are well-separated into the sum of two single solitons.

\section {Variational characterization}
\label{3s}
In this section, we show that the 2-soliton profiles can be viewed as critical points of an appropriate action functional. Firstly, by recombining the four conservative quantities $E_1$, $E_2$, $E_3$, and $E_4$, we define the following three new conservative quantities
\eq{
 F_1(m)&\triangleq E_2(m)+\frac{1}{\kappa^2}E_1(m)=\int_{\mathbb{R}}\left(\frac{1}{m}+\frac{1}{\kappa^2}m-\frac{2}{\kappa}\right)\,dx,
\\
F_2(m)&\triangleq E_3(m)+\frac{3}{4\kappa^4}E_1(m)=\int_{\mathbb{R}}\left(\frac{m_x^2}{m^5}+\frac{1}{4m^3}+\frac{3}{4\kappa^4}m-\frac{1}{\kappa^3}\right)\,dx,
\\
F_3(m)&\triangleq E_4(m)+\frac{5}{16\kappa^6}E_1(m)
\\&=\int_{\mathbb{R}}\left(\frac{m_{xx}^2}{2m^7}+\frac{5m_x^2}{4m^7}-\frac{7m_x^4}{2m^9}+\frac{1}{16m^5}+\frac{5}{16\kappa^6}m-\frac{3}{8\kappa^5}\right)\,dx.
}{3.1}
The functionals $F_i$, $i=1,2,3$ above are defined in such a way that they are Fr\'echet differentiable on $X_\kappa$.

Next, we consider the minimization of $F_3(m)$ under the constraints defined by
\eqnn{
F_1(m)=F_1(\tilde{\mu}),\quad F_2(m)=F_2(\tilde{\mu}).
}
The associated Lagrangian $F(m)$ is defined by
\eqnn{
F(m):=F_3(m)+\lambda_1F_1(m)+\lambda_2F_2(m), \ \ m\in X_\kappa,
}
where $m=u-u_{xx}$, and $\lambda_1$ and $\lambda_2$ represent two Lagrange multipliers.
For convenience, we express the gradient of the related Lagrangian as
\eqnn{
G(m):=G_3(m)+\lambda_1G_1(m)+\lambda_2G_2(m).
}
The following lemma provides a non-isolated set of critical
points.

\begin{lemma} \label{le3.1}
(Variational characterization) For any fixed $0<c_2<c_1$ and $c_1, c_2\in(3\kappa^2,9\kappa^2), \kappa>0$, the critical points $\tilde{\mu}$ of
Lagrangian $F(m)$ coincide with the 2-soliton profiles $\tilde{\mu}= U^{(2)}-U^{(2)}_{xx}$ for the mCH equation \eqref{1},
if and only if
\eq{
\lambda_1&=-\frac{1}{16\kappa^4}+\frac{1}{2\kappa^2}\left(\frac{1}{c_1-\kappa^2}+\frac{1}{c_2-\kappa^2}\right)+\frac{2}{(c_1-\kappa^2)(c_2-\kappa^2)},\\
 \lambda_2&=-\left(\frac{1}{4\kappa^2}+\frac{1}{c_1-\kappa^2}+\frac{1}{c_2-\kappa^2}\right).
}{3.7}
\end{lemma}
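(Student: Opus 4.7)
The plan is to characterize the critical points of $F(m) = F_3(m) + \lambda_1 F_1(m) + \lambda_2 F_2(m)$ via the Euler-Lagrange equation $G(m) := G_3(m) + \lambda_1 G_1(m) + \lambda_2 G_2(m) = 0$, where $G_i := \delta F_i/\delta m$. Routine variational calculations on \eqref{3.1} give
\[
G_1(m) = \frac{1}{\kappa^2} - \frac{1}{m^2}, \qquad G_2(m) = \frac{3}{4\kappa^4} - \frac{3}{4m^4} - \frac{2 m_{xx}}{m^5} + \frac{5 m_x^2}{m^6},
\]
together with a fourth-order nonlinear expression for $G_3(m)$ obtained by applying the Euler-Lagrange operator to the integrand of $F_3$. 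The critical-point condition $G(m) = 0$ is then an autonomous fourth-order nonlinear ODE in $x$, with $t$ entering only as a parameter through the profile.

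For the necessity of \eqref{3.7}, I would exploit the spatial locality of $G$ together with the asymptotic decoupling from Section \ref{2s}: as $t \to +\infty$, $\tilde{\mu}(t,\cdot)$ splits into two well-separated 1-soliton profiles $\mu_{c_1}$ and $\mu_{c_2}$, so $G(\tilde{\mu}) \equiv 0$ forces $G(\mu_{c_i}) \equiv 0$ for $i = 1, 2$ individually. For each 1-soliton, the traveling-wave reduction of \eqref{1} yields the first integral $(u^2 - u_x^2 - c_i)\mu_{c_i} = (\kappa^2 - c_i)\kappa$ tying $u$ to $\mu_{c_i}$; combined with $m = u - u_{xx}$, this eliminates the higher derivatives appearing in $G_1(\mu_{c_i})$, $G_2(\mu_{c_i})$, $G_3(\mu_{c_i})$ in favor of $\mu_{c_i}$, $\kappa$, and $c_i$. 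The identity $G_3 + \lambda_1 G_1 + \lambda_2 G_2 = 0$ then collapses to a single algebraic relation among $\lambda_1, \lambda_2, c_i, \kappa$, which is linear in $(\lambda_1, \lambda_2)$. Writing this relation for $i = 1, 2$ produces a $2 \times 2$ linear system for $(\lambda_1, \lambda_2)$ whose unique solution is precisely \eqref{3.7}, where one recognizes $\lambda_2$ (up to a $\kappa$-dependent shift) as the sum of $(c_i - \kappa^2)^{-1}$ and $\lambda_1$ as their product — the elementary symmetric functions expected from the Lenard-style recursion of the mCH hierarchy.

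For sufficiency, with the multipliers fixed by \eqref{3.7}, I would verify that the full (non-asymptotic) 2-soliton $\tilde{\mu}$ satisfies $G(\tilde{\mu}) = 0$ pointwise in $x$. The cleanest route is to pass to the reciprocal variables $(y, \tau)$ of \eqref{2.1} and to use the tau-function representation \eqref{2.6}--\eqref{2.9}: in those coordinates the mCH flow is conjugate to a KdV-type bilinear hierarchy, the functionals $F_1, F_2, F_3$ become standard conserved densities therein, and the classical fact that the $N$-soliton is a stationary point of a linear combination of hierarchy Hamiltonians with multipliers given by elementary symmetric functions in $k_i^2$ applies. Translating back using \eqref{2.7} and the dispersion relations \eqref{2.10} between $k_i$ and $c_i$ produces exactly the coefficients in \eqref{3.7}, so in particular the critical-point property holds in the full space-time profile, not merely in the asymptotic regime.

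The main obstacle will be the sufficiency step: the asymptotic analysis pins down $(\lambda_1, \lambda_2)$ uniquely, but showing that the interaction region of the 2-soliton contributes no obstruction to $G(\tilde{\mu}) = 0$ requires a careful combination of the Hirota bilinear identities satisfied by $f$ and $g$ in \eqref{2.8}--\eqref{2.9}. A fully self-contained check would involve lengthy algebra with tau-function derivatives; invoking the hierarchy interpretation of $F_1, F_2, F_3$ as successive Lenard invariants of mCH bypasses most of it, leaving only the identification of coefficients via \eqref{2.10}.
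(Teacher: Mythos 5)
Your necessity argument is essentially the paper's: the paper also uses the locality of $G$ plus the long-time splitting of $\tilde\mu$ into $\mu_{c_1}+\mu_{c_2}$ to deduce $G(\mu_{c_i})=0$, then reduces each of these to a scalar relation linear in $(\lambda_1,\lambda_2)$ and solves the resulting $2\times 2$ system. One point you gloss over: for the pointwise identity $G_3(\mu_{c_i})+\lambda_1 G_1(\mu_{c_i})+\lambda_2 G_2(\mu_{c_i})=0$ to ``collapse to a single algebraic relation,'' you need the $x$-dependence to factor out, i.e.\ that $G_2(\mu_c)$ and $G_3(\mu_c)$ are \emph{constant} multiples of $G_1(\mu_c)$. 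This is the computational heart of the paper's Step~1 (equation \eqref{G3G2G1}), carried out by the trick of multiplying by $\mu_{c,x}$ so that each $G_i(\mu_c)\cdot\mu_{c,x}$ becomes an exact derivative expressible through the first integrals \eqref{3.24}--\eqref{3.26}; your elimination scheme would have to reproduce this proportionality, and it should, but it is the one nontrivial verification in the necessity direction.

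Where you genuinely diverge is sufficiency, and there your argument has a gap. The paper handles $G(\tilde\mu)=0$ softly, by citing \cite[Proposition 3.3]{LW20}: since $\tilde\mu=\mu_{c_1}+\mu_{c_2}+e.s.t.$ with $G(\mu_{c_i})=0$, the conserved-functional structure forces $G(\tilde\mu)\equiv 0$ for all time, not just asymptotically. Your proposed route --- pass to the reciprocal variables $(y,\tau)$, identify $F_1,F_2,F_3$ with KdV-hierarchy Hamiltonians, and invoke the classical stationarity of $N$-solitons under linear combinations of hierarchy flows --- cannot be applied off the shelf. The Euler--Lagrange operator $\delta/\delta m$ in Lemma \ref{le3.1} is taken with respect to variations in the $x$-variable, and the reciprocal transformation \eqref{2.1} depends on $m$ itself ($dy=m\,dx$), so variational derivatives and conserved densities do not transport covariantly between the two pictures; establishing that the $x$-space critical-point equation is equivalent to a $y$-space hierarchy stationarity condition is itself a substantial claim that you have not justified. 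As written, your sufficiency step amounts to asserting the conclusion via an analogy, and the ``lengthy algebra with tau-function derivatives'' you defer is precisely the content that would need to be supplied. The paper's appeal to \cite{LW20} avoids this entirely and is the cleaner route; if you prefer a self-contained argument, the standard mechanism is that $G(\tilde\mu(t,\cdot))$ satisfies a linear equation inherited from the conservation of $F$ along the flow and vanishes as $t\to\infty$, hence vanishes identically.
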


\begin{proof}
The proof is divided into three steps.

{\sl Step 1. Computation of the 1-soliton relation.}

Recall that the 1-soliton can be determined by searching for a traveling wave solution of \eqref{1} of the form $u(t,x)=\phi_c(\xi)$, $\xi=x-ct$.
After integration, we find that the profile $\phi_c$ satisfies
\begin{equation}\label{3.9}
(\phi_c-\partial_x^2\phi_c)((\partial_x\phi_c)^2-\phi^2+c)=A,
\end{equation}
where $A$ is an integration constant. It is easily seen that \eqref{3.9} has the first integral
\begin{equation}\label{3.10}
(\phi_c^2-(\partial_x\phi_c)^2-c)^2+4A\phi_c=E,
\end{equation}
where $E$ is another integral constant.

The smooth solitary wave corresponds to the homoclinic orbit to the
equilibrium point $(\phi_c, \phi_{c,x} ) = (\kappa, 0)$ \cite[Section 3]{DLL25}, {\color{black}{where we have written $\phi_{c,x} =\partial_x\phi_c$ for convenience.}} So the homoclinic orbit must be given by
\begin{equation}\label{3.11}
 \phi_c^2-\phi_{c,x}^2-c=-\sqrt{E-4A\phi_c},
\end{equation}
where
\begin{equation}\label{3.12}
 A=\kappa(c-\kappa^2), \ \ E=(c-\kappa^2)(c+3\kappa^2).
\end{equation}
Using \eqref{3.9} and \eqref{3.11} we have
\begin{equation}\label{3.13}
\mu_c=\phi_c-\phi_{c,xx}=\frac{A}{\phi_{c,x}^2-\phi_c^2+c}=\frac{A}{\sqrt{E-4A\phi_c}}.
\end{equation}
Thus, a direct calculation shows that
\begin{equation}\label{3.14}
 \mu_{c,x}=\frac{2A\phi_{c,x}\mu_c}{(\phi_c^2-\phi_{c,x}^2-c)^2}=2A^{-1}\phi_{c,x}\mu_c^3,
\end{equation}
\eqnn{
\mu_{c,xx}=2A^{-1}(\phi_c-\mu_c)\mu_c^3+12A^{-2}\phi_{c,x}^2\mu_c^5,
}
and
\begin{equation}\label{3.16}
\mu_{c,xxx}=\mu_{c,x}(1-20A^{-1}\mu_c^3+18A^{-1}\mu_c^3\phi_c+60A^{-2}\phi_{c,x}^2\mu_c^4).
\end{equation}

On the other hand, a simple calculation gives
\eqnn{
 G_1(m)&=\frac{1}{\kappa^2}-\frac{1}{m^2},
\\
G_2(m)&=\frac{-2m_{xx}}{m^5}+\frac{5m_x^2}{m^6}-\frac{3}{4m^4}+\frac{3}{4\kappa^4},
}
and
\eqnn{
G_3(m)&=  \frac{m_{xxxx}}{m^7}-\frac{14m_xm_{xxx}}{m^8}-\frac{21m_{xx}^2}{2m^8}+\frac{98m_x^2m_{xx}}{m^9}\\
&   -\frac{189m_x^4}{2m^{10}}-\frac{5m_{xx}}{2m^7}+\frac{35m_x^2}{4m^8}-\frac{5}{16m^6}+\frac{5}{16\kappa^6}.
}

Our aim now is to show that, there are two variational equations
for single solitons,
 \begin{equation}\label{3.20}
G_2(\mu_c)=\omega_1G_1(\mu_c),\quad G_3(\mu_c)=\omega_2G_1(\mu_c),
 \end{equation}
where the constants $\omega_1$ and $\omega_2$ are uniquely determined by $c$ and $\kappa$.

To simplify our computations, we do not calculate $G_i(\mu_c),\;i=1,2,3$ directly, but instead we choose to calculate $G_i(\mu_c)\cdot \mu_{c,x},\;i=1,2,3$.
The reason for this is that we find that \eqref{3.20} is equivalent to
$$
G_2(\mu_c)\cdot \mu_{c,x}=\omega_1G_1(\mu_c)\cdot \mu_{c,x},\quad G_3(\mu_c)\cdot \mu_{c,x}=\omega_2G_1(\mu_c)\cdot \mu_{c,x}
$$
because of continuity and the fact that the zeros of $\mu_{c,x}$ are all isolated.

Note that
 \eqnn{
G_1(m)\cdot m_x&=\left(\frac{1}{\kappa^2}-\frac{1}{m^2}\right)m_x=\left(\frac{1}{\kappa^2}m+\frac{1}{m}\right)_x,
\\
G_2(m)\cdot m_x&= \left(\frac{-2m_{xx}}{m^5}+\frac{5m_x^2}{m^6}-\frac{3}{4m^4}+\frac{3}{4\kappa^4}\right)m_x \nonumber\\
&= \left(\frac{-m_x^2}{m^5}+\frac{1}{4m^3}+\frac{3}{4\kappa^4}m\right)_x,
}
and
\begin{eqnarray*}
G_3(m)\cdot m_x&=&  \left(\frac{m_{xxxx}}{m^7}-\frac{14m_xm_{xxx}}{m^8}-\frac{21m_{xx}^2}{2m^8}+\frac{98m_x^2m_{xx}}{m^9}\right.\nonumber\\
& & \left. -\frac{189m_x^4}{2m^{10}}-\frac{5m_{xx}}{2m^7}+\frac{35m_x^2}{4m^8}-\frac{5}{16m^6}+\frac{5}{16\kappa^6}\right)m_x\\
&=&  \left(\frac{m_xm_{xxx}}{m^7}-\frac{7m_{xx}m_{x}^2}{m^8}-\frac{m_{xx}^2}{2m^7}+\frac{21m_x^4}{2m^9}\right.\nonumber\\
& &  \left. -\frac{5m_x^2}{4m^{7}}+\frac{1}{16m^5}+\frac{5}{16\kappa^6}m\right)_x\nonumber.
\end{eqnarray*}

Recall from \eqref{3.13}, we have
\begin{equation}\label{3.24}
 \phi_c=\frac{E}{4A}-\frac{A}{4\mu_c^2},
 \end{equation}
and
\begin{equation}\label{3.25}
 \phi_{c,x}^2=\frac{A}{\mu_c}+\phi_c^2-c=\frac{A}{\mu_c}-\frac{E}{8\mu_c^2}+\frac{A^2}{16\mu_c^4}+\frac{E^2}{16A^2}-c.
 \end{equation}
Substituting \eqref{3.24} and \eqref{3.25} into \eqref{3.14}, we get
\begin{equation}\label{3.26}
 \mu_{c,x}^2=4A^{-2}\phi_{c,x}^2\mu_c^6=\frac{1}{4}\mu_c^2-\frac{1}{2}A^{-2}E\mu_c^4+4A^{-1}\mu_c^5+\left(\frac{1}{4}A^{-4}E^2-4A^{-2}c\right)\mu_c^6.
 \end{equation}
In view of \eqref{3.12} and \eqref{3.26}, a straight-forward derivation leads to
\begin{eqnarray*}
 \frac{-\mu_{c,x}^2}{\mu_c^5}+\frac{1}{4\mu_c^3}+\frac{3}{4\kappa^4}\mu_c
 &=&  \left(4A^{-2}c-\frac{1}{4}A^{-4}E^2+\frac{3}{4\kappa^4}\right)\mu_c+\frac{E}{2A^2}\cdot \frac{1}{\mu_c}-4A^{-1}\nonumber\\
&=&  \frac{c+3\kappa^2}{2\kappa^4(c-\kappa^2)}\mu_c+\frac{c+3\kappa^2}{2\kappa^2(c-\kappa^2)}\cdot \frac{1}{\mu_c}-\frac{4}{\kappa(c-\kappa^2)}\nonumber\\
&=& \frac{c+3\kappa^2}{2\kappa^2(c-\kappa^2)}\left(\frac{1}{\kappa^2}\mu_c+\frac{1}{\mu_c}\right)-\frac{4}{\kappa(c-\kappa^2)}.
\end{eqnarray*}
This means that
\begin{equation}\label{3.28}
 G_2(\mu_c)\cdot \mu_{c,x}=\frac{c+3\kappa^2}{2\kappa^2(c-\kappa^2)}\left(\frac{1}{\kappa^2}\mu_c+\frac{1}{\mu_c}\right)_x=\frac{c+3\kappa^2}{2\kappa^2(c-\kappa^2)}G_1(\mu_c)\cdot \mu_{c,x}.
 \end{equation}

Similarly, applying \eqref{3.14}--\eqref{3.16} and \eqref{3.24}--\eqref{3.26}, after a simplification, we can obtain
\begin{eqnarray*}
& & \frac{\mu_{c,x}\mu_{c,xxx}}{\mu_c^7}-\frac{7\mu_{c,xx}\mu_{c,x}^2}{\mu_c^8}-\frac{\mu_{c,xx}^2}{2\mu_c^7}+\frac{21\mu_{c,x}^4}{2\mu_c^9} -\frac{5\mu_{c,x}^2}{4\mu_c^{7}}+\frac{1}{16\mu_c^5}+\frac{5}{16\kappa^6}\mu_c \nonumber\\
&=&  -A^{-2}\phi_{c,x}^2\mu_c^{-1}-2A^{-2}(\phi_c-\mu_c)^2\mu_c^{-1}-8A^{-3}\phi_c\mu_c\phi_{c,x}^2+\frac{1}{16\mu_c^5}+\frac{5}{16\kappa^6}\mu_c\nonumber\\
&=& \frac{3c^2+2c\kappa^2+27\kappa^4}{16\kappa^4(c-\kappa^2)^2}\left(\frac{1}{\kappa^2}\mu_c+\frac{1}{\mu_c}\right)-\frac{c+3\kappa^2}{\kappa^3(c-\kappa^2)^2},
\end{eqnarray*}
which means that
\begin{equation}\label{3.30}
 G_3(\mu_c)\cdot \mu_{c,x}=\frac{3c^2+2c\kappa^2+27\kappa^4}{16\kappa^4(c-\kappa^2)^2}G_1(\mu_c)\cdot \mu_{c,x}.
 \end{equation}
Therefore, by \eqref{3.28} and \eqref{3.30}, we have
\eq{
 G_2(\mu_c)=\frac{c+3\kappa^2}{2\kappa^2(c-\kappa^2)}G_1(\mu_c), \quad G_3(\mu_c)=\frac{3c^2+2c\kappa^2+27\kappa^4}{16\kappa^4(c-\kappa^2)^2}G_1(\mu_c).
 }{G3G2G1}\\

{\sl Step 2. Search for candidate multipliers for 2-soliton.}

Assume that the two-soliton solution $\tilde{\mu}(x; c_1, c_2, y_{10}, y_{20})$ is a critical point of
$F$, {\color{black}{that is,}}  the Fr\'echet derivative of $F$ denoted by $G$ is such that $G(\tilde{\mu})=0$. Since $G(\tilde{\mu})\in H^{\infty}(\mathbb{R})$, $G(\tilde{\mu})=0$ implies almost everywhere (a.e.) point-wise vanishing.
Consequently, the long-time asymptotic behavior of 2-soliton ensure that $G(\mu_{c_1})=G(\mu_{c_2})=0$. It means that single soliton profiles $\mu_{c_1}$ and $\mu_{c_2}$ with speeds $c_1$ and $c_2$ respectively,  are also critical points of $F$. This fact can help to determine the multipliers uniquely.

Note that $G(m)=G_3(m)+\lambda_1G_1(m)+\lambda_2G_2(m)$, then $G(\mu_{c_1})=G(\mu_{c_2})=0$ reads
\begin{equation}\label{3.32}
 \left(\lambda_1+\frac{3c_j^2+2c_j\kappa^2+27\kappa^4}{16\kappa^4(c_j-\kappa^2)^2}+\lambda_2\cdot\frac{c_j+3\kappa^2}{2\kappa^2(c_j-\kappa^2)}\right)G_1(\mu_{c_j})=0, \;j=1,2,
 \end{equation}
 where we have used \eqref{G3G2G1}.
Because $G_1(\mu_{c_j})\neq 0,\;i=1,2$, it follows from \eqref{3.32} that
\eqnn{
 \lambda_1+\frac{3c_1^2+2c_1\kappa^2+27\kappa^4}{16\kappa^4(c_1-\kappa^2)^2}+\lambda_2\cdot\frac{c_1+3\kappa^2}{2\kappa^2(c_1-\kappa^2)}=0,\\
 \lambda_1+\frac{3c_2^2+2c_2\kappa^2+27\kappa^4}{16\kappa^4(c_2-\kappa^2)^2}+\lambda_2\cdot\frac{c_2+3\kappa^2}{2\kappa^2(c_2-\kappa^2)}=0.
}
By solving the above system, we obtain $\lambda_1$ and $\lambda_2$ as in \eqref{3.7}.
\\

{\sl Step 3. Verification of $G(\tilde{\mu})= 0$.}

As shown in Section 2, any 2-soliton $\tilde{\mu}$ can be expressed as the sum of individual solitons with exponentially
small errors as $t\rightarrow \infty$. Then this implies that the long-time asymptotic profile will yield
\begin{equation*}
 G(\tilde{\mu})=G(\mu_{c_1})+G(\mu_{c_2})+e.s.t..
 \end{equation*}
Furthermore, it is demonstrated in Proposition 3.3 of \cite{LW20} that, if the 2-soliton $\tilde{\mu}$ {\color{black}{satisfies}} that
\begin{equation*}
 \tilde{\mu}=\mu_{c_1}+\mu_{c_2}+e.s.t., \quad G(\mu_{c_1})=G(\mu_{c_2})=0,
 \end{equation*}
as $t$ is very large, then there exists $\lambda_1$ and $\lambda_2$ such that this 2-soliton $\tilde{\mu}$ verifies the variational principle
\begin{equation*}
 G(\tilde{\mu})=G_3(\tilde{\mu})+\lambda_1G_1(\tilde{\mu})+\lambda_2G_2(\tilde{\mu})=0.
 \end{equation*}
Thus it implies that as shown in Step 2, $\lambda_1$ and $\lambda_2$ must be
given by \eqref{3.7}.
The proof of Lemma \ref{le3.1} is completed.
\end{proof}

\section {Spectral Analysis}
\label{4s}
To establish that the 2-soliton profiles $\tilde{\mu}$ represent constrained local minimizers,
 we need to examine the second variation operator denoted as $\mathcal{L}:=\frac{\delta^2 F}{\delta m^2}(\tilde{\mu})$. By a straight-forward calculation, we have
\begin{eqnarray}\label{4.1}
\mathcal{L}[h]&:=& (\tilde{\mu}^{-7}h_{xx})_{xx}+\left[\left(42\tilde{\mu}^{-9}\tilde{\mu}_x^2-14\tilde{\mu}^{-8}\tilde{\mu}_{xx}-\frac{5}{2}\tilde{\mu}^{-7}-2\lambda_2\tilde{\mu}^{-5}\right)h_x\right]_x\nonumber\\
& &  +\left(\frac{15}{8}\tilde{\mu}^{-7}+2\lambda_1\tilde{\mu}^{-3}+3\lambda_2\tilde{\mu}^{-5}\right)h+q(\tilde{\mu})h,
\end{eqnarray}
where
\begin{eqnarray*}
q(\tilde{\mu})&:=& 28\tilde{\mu}^{-9}\tilde{\mu}_{xx}^2+70\tilde{\mu}^{-9}\tilde{\mu}_x^2-\frac{5}{2}\left(\tilde{\mu}^{-7}\right)_{xx}-315\tilde{\mu}^{-11}\tilde{\mu}_x^4-126\left(\tilde{\mu}^{-10}\tilde{\mu}_x^3\right)_x\\
& &  -7\left(\tilde{\mu}^{-8}\tilde{\mu}_{xx}\right)_{xx}+30\lambda_2\tilde{\mu}^{-7}\tilde{\mu}_x^2-2\lambda_2\left(\tilde{\mu}^{-5}\right)_{xx}.
\end{eqnarray*}
with $q(\tilde{\mu})$ above being a smooth function of the 2-soliton profile $\tilde{\mu}$, rapidly
decaying to $0$ as $|x|\rightarrow \infty$. In this section, we describe the spectrum of this operator. Our analysis follows the arguments presented in \cite{Lax68, MS93}.

\begin{lemma} \label{le4.1}
 $\mathcal{L}$ is a linear, unbounded self-adjoint operator in $L^2(\mathbb{R})$, with dense domain $H^4(\mathbb{R})$.
\end{lemma}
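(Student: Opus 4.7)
The plan is to view $\mathcal{L}$ as a fourth-order, formally symmetric, elliptic differential operator with smooth bounded coefficients whose higher-order structure is asymptotically constant, and then realize it as a Kato-Rellich perturbation of the constant-coefficient self-adjoint operator at infinity.

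First, I would collect the regularity of the coefficients. The 2-soliton profile $\tilde{\mu}$ is smooth, satisfies $\tilde{\mu}(x)>0$, and is bounded above and below (with $\tilde{\mu}(x)\to\kappa$ as $|x|\to\infty$); moreover, $\tilde{\mu}-\kappa$ and all its derivatives decay (in fact exponentially, from the explicit parametric formulas). Hence the leading coefficient $a(x):=\tilde{\mu}^{-7}(x)$ is smooth and satisfies $0<\alpha\le a(x)\le \beta<\infty$, and the coefficients multiplying $h_x$ and $h$ in \eqref{4.1}, together with $q(\tilde{\mu})$, are smooth bounded functions. From this, a direct estimate using the boundedness of multiplication in $L^2$ shows that $\mathcal{L}$ maps $H^4(\mathbb{R})$ continuously into $L^2(\mathbb{R})$; density of $H^4(\mathbb{R})$ in $L^2(\mathbb{R})$ is immediate because it contains $C_c^\infty(\mathbb{R})$, and the operator is unbounded on $L^2$ because its principal symbol is $\kappa^{-7}\xi^{4}$.

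Next, I would verify symmetry on $H^4(\mathbb{R})$. The operator has the manifestly symmetric form $\mathcal{L}[h]=(a h_{xx})_{xx}+(b h_x)_x+ch$ (after expressing $q(\tilde{\mu})h$ together with the multiplicative coefficient of $h$ as the potential $c(x)h$, which is legitimate since the difference is absorbed into $c$). For $h_1,h_2\in H^4(\mathbb{R})$, two integrations by parts on each term give $\langle \mathcal{L}h_1,h_2\rangle=\langle h_1,\mathcal{L}h_2\rangle$, with all boundary terms vanishing because $h_j$ and their derivatives up to order three belong to $L^2(\mathbb{R})$ and are continuous, hence vanish at infinity.

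For the self-adjointness itself, I would decompose $\mathcal{L}=\mathcal{L}_0+\mathcal{V}$, where $\mathcal{L}_0:=\kappa^{-7}\partial_x^4$ is taken with domain $H^4(\mathbb{R})$. By the Fourier transform, $\mathcal{L}_0$ is unitarily equivalent to multiplication by $\kappa^{-7}\xi^4$, and hence is self-adjoint on exactly $H^4(\mathbb{R})$. The perturbation $\mathcal{V}$ has the form
\begin{equation*}
\mathcal{V}[h]=(\tilde{\mu}^{-7}-\kappa^{-7})h_{xxxx}+\alpha_3(x)h_{xxx}+\alpha_2(x)h_{xx}+\alpha_1(x)h_x+\alpha_0(x)h,
\end{equation*}
where each coefficient is smooth, bounded, and decays to $0$ as $|x|\to\infty$ (since each is built out of $\tilde{\mu}-\kappa$ and its derivatives). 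The plan is to show that $\mathcal{V}$ is symmetric on $H^4(\mathbb{R})$ (by the same integration-by-parts argument, using the divergence form) and that it is $\mathcal{L}_0$-bounded with relative bound zero. For the leading piece $(\tilde{\mu}^{-7}-\kappa^{-7})h_{xxxx}$, given any $\epsilon>0$ I would split the decaying factor as $\chi_{\{|x|>R\}}(\tilde{\mu}^{-7}-\kappa^{-7})+\chi_{\{|x|\le R\}}(\tilde{\mu}^{-7}-\kappa^{-7})$, choose $R$ so that the first summand has $L^\infty$-norm below $\epsilon\kappa^7$, and control the second, compactly supported piece by $\epsilon\|\partial_x^4 h\|_{L^2}+C_\epsilon\|h\|_{L^2}$ via the elementary interpolation $\|h_{xxxx}\cdot \mathbf{1}_{K}\|_{L^2}\le \|h\|_{H^4}$ combined with the standard inequality $\|h^{(k)}\|_{L^2}\le\epsilon\|h^{(4)}\|_{L^2}+C_\epsilon\|h\|_{L^2}$ for $k<4$. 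The same interpolation handles the lower-order terms of $\mathcal{V}$. Then Kato-Rellich gives that $\mathcal{L}=\mathcal{L}_0+\mathcal{V}$ is self-adjoint on $\operatorname{Dom}(\mathcal{L}_0)=H^4(\mathbb{R})$.

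The step I expect to be the main technical obstacle is producing a clean relative-bound estimate for the fourth-order piece $(\tilde{\mu}^{-7}-\kappa^{-7})h_{xxxx}$, because unlike the lower-order terms this one does not immediately fall under an $\epsilon$-interpolation bound; the saving grace is the decay of $\tilde{\mu}-\kappa$ at infinity, which allows the splitting described above. Once the perturbation estimate is in hand, the rest of the argument (symmetry, density, unboundedness) is routine.
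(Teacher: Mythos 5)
Your strategy is far more detailed than the paper's own proof, which only carries out the integration by parts to verify symmetry on $H^4(\mathbb{R})$ and then simply asserts that $D(\mathcal{L}^*)$ can be identified with $D(\mathcal{L})=H^4(\mathbb{R})$. The symmetry, density, and unboundedness parts of your argument are fine and match the paper where they overlap. However, there is a genuine flaw in the step you yourself flagged as the main obstacle. For the top-order piece $(\tilde{\mu}^{-7}-\kappa^{-7})h_{xxxx}$ you claim relative bound zero with respect to $\mathcal{L}_0=\kappa^{-7}\partial_x^4$ by splitting the coefficient into a far-field part of small sup-norm and a compactly supported part, and you propose to absorb the compactly supported part using $\|h^{(k)}\|_{L^2}\le\epsilon\|h^{(4)}\|_{L^2}+C_\epsilon\|h\|_{L^2}$. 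That inequality is available only for $k<4$, whereas the compactly supported piece still multiplies the full fourth derivative: $\|\chi_{K}(\tilde{\mu}^{-7}-\kappa^{-7})h_{xxxx}\|_{L^2}$ is genuinely of size $\sup_{K}|\tilde{\mu}^{-7}-\kappa^{-7}|\cdot\|h^{(4)}\|_{L^2}$ and cannot be interpolated down. What your splitting actually yields is relative bound $\kappa^{7}\sup_x|\tilde{\mu}(x)^{-7}-\kappa^{-7}|$, not zero, so the Kato--Rellich conclusion does not follow as written.

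The argument can be repaired in two standard ways. (i) Kato--Rellich only requires relative bound strictly less than one, so it suffices to check that $\sup_x|\tilde{\mu}(x)^{-7}-\kappa^{-7}|<\kappa^{-7}$, i.e.\ that $\inf_x\tilde{\mu}(x)>2^{-1/7}\kappa$; for the bright solitons considered here this holds because $\tilde{\mu}\ge\kappa$ (cf.\ \eqref{3.13}, which gives $\mu_c\ge\kappa$ for each constituent 1-soliton), but this must be stated and used, since $\tilde{\mu}>0$ alone is not enough. (ii) Alternatively, take the unperturbed operator to be the full divergence-form leading part $h\mapsto(\tilde{\mu}^{-7}h_{xx})_{xx}$, which is the nonnegative self-adjoint operator associated with the closed quadratic form $\int_{\mathbb{R}}\tilde{\mu}^{-7}|h_{xx}|^2\,dx$ on $H^2(\mathbb{R})$, with operator domain $H^4(\mathbb{R})$ by elliptic regularity (using $0<\alpha\le\tilde{\mu}^{-7}\le\beta$); the remaining terms in \eqref{4.1} are then genuinely of order at most two and do have relative bound zero by the interpolation inequality you quote. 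Either repair makes your route a complete and rigorous substitute for the paper's one-line domain assertion.
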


\begin{proof}
Let $h,w\in H^4(\mathbb{R})$. Integrating by parts, one has
\begin{eqnarray*}
\int_\mathbb{R}w\mathcal{L}[h]&:=&\int_\mathbb{R}w(\tilde{\mu}^{-7}h_{xx})_{xx}+\int_{\mathbb{R}}w[(42\tilde{\mu}^{-9}\tilde{\mu}_x^2-14\tilde{\mu}^{-8}\tilde{\mu}_{xx}-\frac{5}{2}\tilde{\mu}^{-7}-2\lambda_2\tilde{\mu}^{-5})h_x]_x\\
& &  +\int_{\mathbb{R}}\left(\frac{15}{8}\tilde{\mu}^{-7}+2\lambda_1\tilde{\mu}^{-3}+3\lambda_2\tilde{\mu}^{-5}\right)wh+q(\tilde{\mu})wh\\
&=& \int_\mathbb{R}(\tilde{\mu}^{-7}w_{xx})_{xx}h+\int_{\mathbb{R}}[(42\tilde{\mu}^{-9}\tilde{\mu}_x^2-14\tilde{\mu}^{-8}\tilde{\mu}_{xx}-\frac{5}{2}\tilde{\mu}^{-7}-2\lambda_2\tilde{\mu}^{-5})w_x]_xh\\
& &  +\int_{\mathbb{R}}\left(\frac{15}{8}\tilde{\mu}^{-7}+2\lambda_1\tilde{\mu}^{-3}+3\lambda_2\tilde{\mu}^{-5}\right)wh+q(\tilde{\mu})wh=\int_{\mathbb{R}}\mathcal{L}[w]h.
\end{eqnarray*}
Moreover, it is clear that $D(\mathcal{L}^*)$ can be identified with $D(\mathcal{L})=H^4(\mathbb{R})$.
\end{proof}
A consequence of Lemma \ref{le4.1} is the fact that the spectrum of $\mathcal{L}$ is real-valued. The following result describes the essential spectrum of $\mathcal{L}$.

\begin{lemma} \label{le4.2}
The essential spectrum of the operator $\mathcal{L}$ is equal to the spectrum of its corresponding asymptotic  constant
coefficients operator
\begin{eqnarray*}
\mathcal{L}_{\infty}[h]&:=& \kappa^{-7}h_{xxxx}-\left(\frac{5}{2}\kappa^{-7}+2\lambda_2\kappa^{-5}\right)h_{xx}+\left(\frac{15}{8}\kappa^{-7}+2\lambda_1\kappa^{-3}+3\lambda_2\lambda^{-5}\right)h\nonumber\\
&=& \kappa^{-7}\left(-D_x^2+\frac{c_1-3\kappa^2}{c_1-\kappa^2}\right)\left(-D_x^2+\frac{c_2-3\kappa^2}{c_2-\kappa^2}\right)h.
\end{eqnarray*}
In particular, the essential spectrum of $\mathcal{L}$ is $\sigma_{ess}(\mathcal{L})=\left[\frac{(c_1-3\kappa^2)(c_2-3\kappa^2)}{\kappa^7(c_1-\kappa^2)(c_2-\kappa^2)},+\infty\right)$.
\end{lemma}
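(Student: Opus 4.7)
The plan is to identify $\sigma_{ess}(\mathcal{L})$ with $\sigma(\mathcal{L}_\infty)$ by the standard principle that asymptotically constant-coefficient self-adjoint elliptic operators on $\mathbb{R}$ inherit their essential spectrum from the constant-coefficient limit, and then to compute $\sigma(\mathcal{L}_\infty)$ explicitly via the Fourier transform using the factorization claimed in the statement.

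First, I would verify the factorization $\mathcal{L}_\infty = \kappa^{-7}(-D_x^2 + \alpha_1)(-D_x^2 + \alpha_2)$, where $\alpha_i := (c_i-3\kappa^2)/(c_i-\kappa^2) = 1 - 2\kappa^2/(c_i-\kappa^2)$. Expanding the product gives $\kappa^{-7}D_x^4 - \kappa^{-7}(\alpha_1+\alpha_2)D_x^2 + \kappa^{-7}\alpha_1\alpha_2$. After substituting the multipliers from \eqref{3.7}, matching the coefficient of $-D_x^2$ reduces to the identity $\tfrac{5}{2}\kappa^{-7} + 2\lambda_2\kappa^{-5} = \kappa^{-7}(\alpha_1+\alpha_2)$, and matching the zeroth-order term reduces to $\tfrac{15}{8}\kappa^{-7} + 2\lambda_1\kappa^{-3} + 3\lambda_2\kappa^{-5} = \kappa^{-7}\alpha_1\alpha_2$; both are routine algebraic checks using the explicit formulas for $\lambda_1,\lambda_2$.

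Second, once the factorization is established, the Fourier transform diagonalizes the self-adjoint operator $\mathcal{L}_\infty$ as multiplication by the real, smooth symbol $p(\xi) = \kappa^{-7}(\xi^2+\alpha_1)(\xi^2+\alpha_2)$, $\xi\in\mathbb{R}$. Since $c_1,c_2\in(3\kappa^2,9\kappa^2)$, both $\alpha_1,\alpha_2>0$, so $p$ attains its minimum $\kappa^{-7}\alpha_1\alpha_2$ at $\xi=0$ and tends to $+\infty$ as $|\xi|\to\infty$. Therefore $\sigma(\mathcal{L}_\infty) = \sigma_{ess}(\mathcal{L}_\infty) = \overline{p(\mathbb{R})} = [\kappa^{-7}\alpha_1\alpha_2,+\infty)$, which is the announced interval.

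Third, to transfer this spectrum to $\mathcal{L}$, I would exploit the fact that the soliton parametrization recalled in Section \ref{2s} implies $\tilde{\mu}-\kappa$ and all its $x$-derivatives decay exponentially as $|x|\to\infty$, so every coefficient of $\mathcal{L}$ converges exponentially to the corresponding coefficient of $\mathcal{L}_\infty$. The inclusion $\sigma(\mathcal{L}_\infty)\subseteq\sigma_{ess}(\mathcal{L})$ is established by constructing Weyl sequences for $\mathcal{L}$ out of truncated plane waves $h_n(x) = n^{-1/2}e^{i\xi_0 x}\chi((x-x_n)/n)$ with $p(\xi_0)=\lambda$ and $x_n\to\infty$, on whose supports the perturbation is exponentially small, so $h_n\rightharpoonup 0$ and $\|(\mathcal{L}-\lambda)h_n\|_{L^2}\to 0$. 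The reverse inclusion $\sigma_{ess}(\mathcal{L})\subseteq\sigma(\mathcal{L}_\infty)$ follows from a Persson-type argument: for $\lambda<\kappa^{-7}\alpha_1\alpha_2$ the operator $\mathcal{L}_\infty-\lambda$ is coercive on $H^2(\mathbb{R})$, and exponential approach of coefficients propagates this coercivity to $\mathcal{L}-\lambda$ outside any sufficiently large interval, which combined with elliptic regularity rules out Weyl sequences at $\lambda$.

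The main obstacle I anticipate is precisely the transfer step, because the top-order coefficient of $\mathcal{L}$ is the variable function $\tilde{\mu}^{-7}$ rather than the constant $\kappa^{-7}$. A naive Weyl-theorem argument through compactness of $(\mathcal{L}-\mathcal{L}_\infty)(\mathcal{L}_\infty-z)^{-1}$ breaks down at top order since, for $V\in C_0(\mathbb{R})$, the operator $V\,\partial_x^4(\mathcal{L}_\infty-z)^{-1}$ is bounded but not generally compact on $L^2(\mathbb{R})$. The cleanest way around this is to bypass relative compactness and use the Weyl-sequence and Persson characterizations directly, as sketched above, since these only require the exponential decay of $\tilde{\mu}-\kappa$ and its derivatives and are insensitive to whether the perturbation is of lower order.
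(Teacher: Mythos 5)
Your proposal is correct, and its computational core --- verifying the factorization of $\mathcal{L}_{\infty}$ from the formulas \eqref{3.7} for $\lambda_1,\lambda_2$ and then reading off $\sigma(\mathcal{L}_{\infty})=\overline{p(\mathbb{R})}=[\kappa^{-7}\alpha_1\alpha_2,+\infty)$ from the Fourier symbol --- is exactly what the paper does. Where you diverge is the identification $\sigma_{ess}(\mathcal{L})=\sigma(\mathcal{L}_{\infty})$: the paper disposes of this in one line by citing Weyl's essential spectrum theorem in the form of Henry's Theorem A.2 for elliptic operators with asymptotically constant coefficients, whereas you construct both inclusions by hand (singular Weyl sequences escaping to infinity for $\sigma(\mathcal{L}_{\infty})\subseteq\sigma_{ess}(\mathcal{L})$, and a Persson-type coercivity bound for the reverse). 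Your concern about the top-order coefficient is legitimate as stated: $(\mathcal{L}-\mathcal{L}_{\infty})(\mathcal{L}_{\infty}-z)^{-1}$ is indeed not compact, because the symbol of $\partial_x^4(\mathcal{L}_{\infty}-z)^{-1}$ tends to the nonzero constant $\kappa^{7}$ at high frequency. But the standard remedy is lighter than your workaround: one applies Weyl's theorem in its resolvent-difference form, writing $(\mathcal{L}-z)^{-1}-(\mathcal{L}_{\infty}-z)^{-1}=-(\mathcal{L}-z)^{-1}(\mathcal{L}-\mathcal{L}_{\infty})(\mathcal{L}_{\infty}-z)^{-1}$ and putting the middle factor in divergence form, so that the derivatives are absorbed by the two resolvents on either side and what remains is multiplication by coefficients built from $\tilde{\mu}-\kappa$ acting from $H^2(\mathbb{R})$ into $L^2(\mathbb{R})$, which is compact since those coefficients vanish at infinity. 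This two-sided smoothing is precisely the situation covered by the reference the paper cites, so your Weyl-sequence/Persson route, while valid and more self-contained, is not forced; both arguments ultimately rest only on the exponential convergence of $\tilde{\mu}$ and its derivatives to the constant background, which follows from the parametrization of Section \ref{2s}.
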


\begin{proof}
 The fact that the essential spectrum  of $\mathcal{L}$ is equal to the constant coefficient operator $\mathcal{L}_{\infty}$ is a classical result obtained from applying Weyl's essential spectrum theorem (see for example \cite[Theorem A.2, p.\ 140]{Henry81}).  The spectrum {of the} constant coefficient operator $\mathcal{L}_{\infty}$ is obtained
by Fourier analysis, that is by finding the values of $\lambda$ for which the equation
$$
\left(\mathcal{L}_{\infty}-\lambda\right)[e^{\ri\sigma x}]=0
$$
has a solution for $\sigma\in \R$. It is then a straightforward computation to find that the essential spectrum is given by the interval specified in the lemma.
\end{proof}
\begin{remark}
Note that $\frac{c_1-3\kappa^2}{c_1-\kappa^2}>0$ and $\frac{c_2-3\kappa^2}{c_2-\kappa^2}>0$. The essential spectrum of $\mathcal{L}$ specified by Lemma \ref{le4.2} thus is positive.
\end{remark}

We introduce now two directions associated to spatial translations. We denote
\eqnn{
 \tilde{\mu}(t;y_{10},y_{20}):=\tilde{\mu}(t,x;c_1,c_2,y_{10},y_{20}),
 }
and define
\begin{equation}\label{4.3}
 \mu_1(t;y_{10},y_{20}):=\partial_{y_{10}}\tilde{\mu}(t;y_{10},y_{20}),\quad  \mu_2(t;y_{10},y_{20}):=\partial_{y_{20}}\tilde{\mu}(t;y_{10},y_{20}).
 \end{equation}
The quantities $\mu_1$ and $\mu_2$ are linearly independent as functions of the $x$-variable, for all
time $t$ fixed, as can be seen from the expressions given in \eqref{2.8} and \eqref{2.9} in the case $k_1\neq k_2$.

\begin{lemma} \label{le4.3}
For each $t\in \mathbb{R}$, one has
\begin{equation*}
\ker(\mathcal{L})=\Span\left\{\mu_1(t;y_{10},y_{20}),\mu_2(t;y_{10},y_{20})\right\}.
\end{equation*}
\end{lemma}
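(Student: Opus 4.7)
The plan is to establish the two inclusions separately, with the difficult direction being an upper bound on $\dim\ker(\mathcal{L})$.

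For the inclusion $\Span\{\mu_1,\mu_2\}\subseteq \ker(\mathcal{L})$, I would exploit the variational characterization from Lemma~\ref{le3.1}: the identity $G(\tilde\mu(t,\cdot;c_1,c_2,y_{10},y_{20}))=0$ holds for every choice of phases $y_{10},y_{20}\in\R$. Since $\mathcal{L}$ is the Fr\'echet derivative of $G$ at $\tilde\mu$, differentiating this identity with respect to $y_{10}$ (respectively $y_{20}$) yields $\mathcal{L}[\mu_1]=0$ (respectively $\mathcal{L}[\mu_2]=0$). The asymptotic formulas recalled in Section~\ref{2s} imply that $\mu_1$ and $\mu_2$ decay exponentially at $\pm\infty$ together with their derivatives, so they sit in $H^4(\R)=D(\mathcal{L})$. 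Linear independence is transparent from the explicit parametrization \eqref{2.8}--\eqref{2.10}: the two phase derivatives localize around the centers of the two solitons, which travel at distinct speeds $c_1\neq c_2$, so their graphs cannot be proportional for any $t$.

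For the reverse inclusion, the strategy is to bound the dimension of the kernel using asymptotic ODE theory. The equation $\mathcal{L}[h]=0$ is a fourth-order linear ODE with smooth coefficients, so its classical solution space is four-dimensional. Because $\tilde\mu-\kappa$ and all its derivatives decay exponentially, the coefficients of $\mathcal{L}$ converge exponentially fast to those of the constant-coefficient operator $\mathcal{L}_\infty$ of Lemma~\ref{le4.2}, whose characteristic equation
\[
\kappa^{-7}\left(\sigma^{2}-\alpha_{1}\right)\left(\sigma^{2}-\alpha_{2}\right)=0,\qquad \alpha_i=\frac{c_i-3\kappa^{2}}{c_i-\kappa^{2}}>0,
\]
has four real nonzero roots $\pm\sqrt{\alpha_1},\pm\sqrt{\alpha_2}$ under the standing hypothesis $3\kappa^{2}<c_{2}<c_{1}<9\kappa^{2}$. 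A Levinson-type asymptotic-integration argument then produces a basis of solutions of $\mathcal{L}[h]=0$ behaving like $e^{\pm\sqrt{\alpha_j}x}$ as $x\to+\infty$, $j=1,2$. Exactly two of these decay at $+\infty$, so any $h\in H^{4}(\R)$ annihilated by $\mathcal{L}$ must lie in this two-dimensional stable subspace. Hence $\dim\ker(\mathcal{L})\leq 2$, and combining with the first inclusion gives the claim.

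The principal obstacle I expect is the asymptotic-integration step for this specific fourth-order operator: one must recast $\mathcal{L}[h]=0$ as a first-order system, verify that the perturbation from the limiting system has integrable (in fact exponentially decaying) coefficients, and check that the limiting system has simple spectrum so that a Levinson-type theorem produces the desired asymptotic basis at $+\infty$ (and symmetrically at $-\infty$). The exponential decay of $\tilde\mu-\kappa$ supplied by Section~\ref{2s} is what powers this, but the change-of-variables bookkeeping needed to place the equation in the canonical form required by the asymptotic-integration theorem is the step where care must be taken. Once this dichotomy is in place, the conclusion $\ker(\mathcal{L})=\Span\{\mu_1,\mu_2\}$ follows immediately.
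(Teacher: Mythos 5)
Your proposal is correct and follows essentially the same route as the paper: the inclusion $\Span\{\mu_1,\mu_2\}\subseteq\ker(\mathcal{L})$ by differentiating $G(\tilde\mu)=0$ with respect to the phases, and the bound $\dim\ker(\mathcal{L})\le 2$ by counting the decaying exponential solutions $e^{\pm\sqrt{\alpha_j}x}$ of the asymptotic constant-coefficient equation. The only difference is that you make explicit the Levinson-type asymptotic-integration step that the paper leaves implicit when it asserts that exactly two solutions of the fourth-order ODE decay at $+\infty$.
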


\begin{proof}
It follows from Lemma \ref{le3.1} that $\partial_{y_{10}}G(\tilde{\mu})=\partial_{y_{20}}G(\tilde{\mu})=0$. Writing down
these identities, we obtain
\begin{equation}\label{4.4}
\mathcal{L}[\mu_1](t;y_{10},y_{20})=\mathcal{L}[\mu_2](t;y_{10},y_{20})=0,
\end{equation}
with $\mathcal{L}$ the operator defined in \eqref{4.1} and $\mu_1, \mu_2$ defined in \eqref{4.3}.
On the other hand, $\mathcal{L}_{\infty}h=0$ has four pure exponential solutions $e^{\pm\sqrt{\frac{c_1-3\kappa^2}{c_1-\kappa^2}}x}, e^{\pm\sqrt{\frac{c_2-3\kappa^2}{c_2-\kappa^2}}x}$.
Consequently, it means that 2 solutions decay as $x\rightarrow +\infty$, and 2 solutions decay as $x\rightarrow -\infty$. Therefore, the null space $\ker(\mathcal{L})$ is spanned by at most two linearly independent solutions in $L^2(\mathbb{R})$.
Finally, in view of \eqref{4.4}, we have the desired conclusion.
\end{proof}

Next, we will give the most important result of this section, which is that the operator $\mathcal{L}$ has only one negative eigenvalue. In order to prove that result, we
follow the strategy presented in \cite{Green91, MS93}. More specifically, we need to use the following conclusions.

\begin{lemma} \label{le4.4} \cite[Lemma 2.2]{MS93} \cite[Section 5]{Green91}
Let $\tilde{\mu}$ be any 2-soliton solution, and $\mu_1, \mu_2$ the corresponding kernel of the operator $\mathcal{L}$. Then $\mathcal{L}$ has
precisely $r$ negative eigenvalues where $r$ is number of roots, counted with appropriate
multiplicity, of the second order Wronskian determinant $Wr(x)$, where
\begin{equation*}
Wr(x):=Wr_2(\mu_1, \mu_2)=\left | \begin{matrix}
\mu_1& \mu_2 \\
(\mu_1)_x& (\mu_2)_x \\
\end{matrix} \right |.
\end{equation*}
\end{lemma}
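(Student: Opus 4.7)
The statement is essentially a direct application of the abstract Sturm-type oscillation theorem for fourth-order self-adjoint differential operators with a two-dimensional kernel, developed in \cite{Green91} and adapted to the multi-soliton setting in \cite[Lemma 2.2]{MS93}. My plan is therefore to verify that the operator $\mathcal{L}$ given in \eqref{4.1} meets each of the structural hypotheses of that abstract theorem, and then invoke it.

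First I would check the following four items. (i) $\mathcal{L}$ is an unbounded self-adjoint operator on $L^2(\mathbb{R})$ with dense domain $H^4(\mathbb{R})$; this is Lemma \ref{le4.1}. (ii) The leading coefficient $\tilde{\mu}^{-7}$ is smooth, strictly positive, and bounded away from zero on $\mathbb{R}$, since $\tilde{\mu}>0$ by the definition of $X_\kappa$ and $\tilde{\mu}\to\kappa>0$ at $\pm\infty$. (iii) The coefficients of $\mathcal{L}$ converge exponentially fast to those of the constant-coefficient operator $\mathcal{L}_\infty$, whose essential spectrum starts strictly above zero (Lemma \ref{le4.2}); this guarantees that the negative spectrum of $\mathcal{L}$ is purely discrete and finite, with all associated eigenfunctions decaying exponentially at $\pm\infty$. (iv) $\ker(\mathcal{L})=\mathrm{span}\{\mu_1,\mu_2\}$ is exactly two-dimensional (Lemma \ref{le4.3}), and both $\mu_1$ and $\mu_2$ decay exponentially at $\pm\infty$, being $y_{i0}$-derivatives of a 2-soliton that is an exponentially localized perturbation of the constant $\kappa$.

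Once the hypotheses are verified, the counting formula follows from the conjugate-point/continuation argument that underlies the cited results. The key mechanism is to homotope $\mathcal{L}$ through a smooth one-parameter family of self-adjoint operators $\mathcal{L}_s$ to its constant-coefficient limit $\mathcal{L}_\infty$, which has no negative eigenvalues, while tracking the dimension of the negative subspace. A negative eigenvalue can only be created or annihilated as $s$ varies when an eigenvalue crosses zero; such a crossing is synchronized with the appearance or disappearance of a zero of $Wr_2(\mu_1,\mu_2)$ of matching multiplicity, via the standard identification of conjugate points for fourth-order self-adjoint operators with two-dimensional kernel. Since $\mathcal{L}_\infty$ contributes no negative eigenvalues, the final count for $\mathcal{L}$ equals the total number $r$ of Wronskian zeros, counted with multiplicity.

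The main obstacle I anticipate is checking that the hypotheses of \cite{Green91, MS93} really do apply verbatim in this setting, in particular the exponential decay of the coefficients of $\mathcal{L}-\mathcal{L}_\infty$ (which should follow directly from the explicit tau-function representation \eqref{2.8}--\eqref{2.10} and its derivatives) and the compatibility between the abstract notion of eigenvalue multiplicity and the algebraic order of Wronskian zeros. A secondary technical point is to ensure that the homotopy can be chosen so that no spectral accumulation at zero occurs; this is automatic here because $\ker(\mathcal{L})$ is exactly two-dimensional along the family and $\mathcal{L}_\infty$ has strictly positive essential spectrum, leaving a definite spectral gap around zero.
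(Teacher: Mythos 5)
The paper does not actually prove this lemma: it is quoted verbatim as an external result, with the two citations (\cite[Lemma 2.2]{MS93} and \cite[Section 5]{Green91}) standing in for the proof, and the only work the paper does around it is the hypothesis-checking that you also list --- self-adjointness on $H^4(\mathbb{R})$ (Lemma \ref{le4.1}), strictly positive essential spectrum of the asymptotic operator (Lemma \ref{le4.2}), and the identification of the two-dimensional kernel (Lemma \ref{le4.3}). Your items (i)--(iv) therefore reproduce exactly the role this lemma plays in the paper, and invoking the cited theorem after that verification is the same move the authors make. To that extent the proposal is fine.

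However, your sketch of the ``key mechanism'' misdescribes how the cited results actually work, and if you intended it as more than a heuristic it would not go through. You propose to homotope $\mathcal{L}$ through a family $\mathcal{L}_s$ to the constant-coefficient operator $\mathcal{L}_\infty$ and to synchronize eigenvalue crossings of $\mathcal{L}_s$ with the ``appearance or disappearance of a zero of $Wr_2(\mu_1,\mu_2)$.'' But $Wr_2(\mu_1,\mu_2)$ is a fixed function of the spatial variable $x$, built from the kernel of the \emph{original} operator $\mathcal{L}$; it does not deform with $s$, so its zeros cannot appear or disappear along your homotopy, and for generic $s$ the operator $\mathcal{L}_s$ has trivial kernel, so there is no analogous Wronskian to track. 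The argument in \cite{Green91} (adapted in \cite[Lemma 2.2]{MS93}) is a \emph{spatial} oscillation/conjugate-point count: one studies the boundary-value problem on truncated intervals $(-\infty,s]$ (or the flow of the subspace of solutions decaying at $-\infty$) and shows that an eigenvalue crosses zero precisely when $s$ passes a zero of the Wronskian of the kernel elements, i.e.\ a conjugate point; letting $s\to+\infty$ and using that the problem near $+\infty$ is governed by $\mathcal{L}_\infty$, which contributes nothing negative, yields the count $r$. Your secondary remark that ``$\ker(\mathcal{L})$ is exactly two-dimensional along the family'' compounds the same confusion. Since the lemma is being cited as a black box this does not invalidate the overall plan, but the mechanism as you state it is not the one that proves the result.
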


The following proposition asserts that
$\mathcal{L}$ has precisely one negative eigenvalue.

\begin{proposition} \label{pro4.1}
 The operator $\mathcal{L}$ has exactly one negative eigenvalue for all finite values
of $y_{10}, y_{20}$ and all $c_1, c_2$ with $3\kappa^2 < c_2 < c_1<9\kappa^2$.
\end{proposition}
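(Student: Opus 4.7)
The strategy is to invoke the Wronskian criterion of Lemma \ref{le4.4}, which reduces the statement to showing that
\[
W(x) := \mu_1(x)(\mu_2)_x(x) - \mu_2(x)(\mu_1)_x(x)
\]
has exactly one root (counted with multiplicity) for each admissible choice of parameters. A first observation is that it suffices to perform this count at a single configuration. Indeed, by Lemma \ref{le4.3} the kernel $\ker(\mathcal{L}) = \Span\{\mu_1,\mu_2\}$ is always two-dimensional, and by Lemma \ref{le4.2} the essential spectrum is bounded below by the strictly positive constant $(c_1-3\kappa^2)(c_2-3\kappa^2)/[\kappa^7(c_1-\kappa^2)(c_2-\kappa^2)]$. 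Consequently, as $(c_1,c_2,y_{10},y_{20})$ vary continuously in the admissible region, a standard Kato-perturbation argument shows that no discrete eigenvalue can cross zero (such a crossing would force $\dim\ker(\mathcal{L})\ge 3$, contradicting Lemma \ref{le4.3}), and no new negative eigenvalue can be emitted from the essential spectrum, which is entirely positive. Hence the Morse index of $\mathcal{L}$ is constant on the admissible parameter range.

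The natural configuration in which to compute is the well-separated regime, reached by fixing $y_{10},y_{20}$ and letting $t\to+\infty$ (invoking the time-translation invariance of the 2-soliton family). Section \ref{2s} then gives
\[
\tilde{\mu}(x) = \mu_{c_1}(x-x_1) + \mu_{c_2}(x-x_2) - \kappa + \mathrm{e.s.t.}, \qquad x_1 - x_2 \to +\infty,
\]
with $x_j = c_j t + y_{j0} + \phi_j$ the phase-shifted soliton centers. Differentiating in the phase parameters yields
\[
\mu_j(x) = -(\mu_{c_j})_x(x-x_j) + \mathrm{e.s.t.}, \qquad j=1,2,
\]
so each $\mu_j$ is, at leading order, a translated 1-soliton translation mode: it has one simple zero at its peak and decays exponentially in the tails at rate $\beta_j := \sqrt{(c_j-3\kappa^2)/(c_j-\kappa^2)}$, which is the exponent read off from the factorisation of the asymptotic operator in Lemma \ref{le4.2}. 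Since $c \mapsto (c-3\kappa^2)/(c-\kappa^2)$ is strictly increasing on $(\kappa^2,\infty)$, we have $\beta_1 > \beta_2$.

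The main obstacle is the explicit root count for $W$ in this regime. Using the tail relations $\mu_j' \sim -\beta_j \mu_j$ at $+\infty$ and $\mu_j' \sim \beta_j \mu_j$ at $-\infty$, a direct substitution yields $W \sim (\beta_1-\beta_2)\mu_1\mu_2 > 0$ as $x \to +\infty$ and $W \sim (\beta_2-\beta_1)\mu_1\mu_2 < 0$ as $x \to -\infty$, forcing at least one sign change. To upgrade this to exactly one, I would expand $\mu_1$, $\mu_2$ and $W$ to subleading order in the small separation parameter $\varepsilon := e^{-\min(\beta_1,\beta_2)(x_1-x_2)}$ using the tau-function representation \eqref{2.6}--\eqref{2.10}, and verify by direct computation that the interaction correction cancels any additional candidate sign changes that a naive soliton-by-soliton analysis suggests in the soliton cores and in the intermediate region $(x_2,x_1)$, leaving a single simple root. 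This mirrors the treatment of the KdV 2-soliton in \cite[Section 4]{MS93} and its adaptation to the dispersive mCH equation in \cite{LLZ25}. Combined with the continuity argument of the first paragraph, this yields exactly one negative eigenvalue of $\mathcal{L}$ for all admissible parameter values.
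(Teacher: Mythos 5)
Your overall strategy is the right one and matches the paper's: reduce to counting the zeros of the Wronskian of $\mu_1,\mu_2$ via Lemma \ref{le4.4}, pass to the well-separated (large-time) regime, and note that the count there propagates to all admissible configurations. Your continuity argument for the last step (no eigenvalue can cross zero without violating Lemma \ref{le4.3}, and none can emerge from the positive essential spectrum of Lemma \ref{le4.2}) is a reasonable way to make explicit something the paper leaves implicit. The easy half of your count is also fine: the tail asymptotics force at least one sign change of $W$.

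The genuine gap is that the decisive step --- showing there is \emph{exactly} one zero --- is not carried out; you only state that you ``would expand to subleading order \dots and verify by direct computation.'' That verification is the entire content of the paper's proof, and it cannot be done from the leading-order approximation $\mu_j\approx-(\mu_{c_j})_x(\cdot-x_j)$ that you write down. The reason is structural: in the core of soliton $i$, the other kernel element $\mu_j$ ($j\neq i$) is exponentially small, so the sign of $W=\mu_1(\mu_2)_x-\mu_2(\mu_1)_x$ there is governed entirely by the leading \emph{interaction} correction to $\mu_j$, which your ansatz discards. Concretely, the paper rewrites $\mu_j=-k_jp^{-2}p_{\xi_j}$ in the reciprocal variable $y$, reduces $W$ to the Wronskian $\tilde{W}r(y)$ in \eqref{4.6} built from $(\ln(g/f))_{y\xi_j}$, and then in each of three regions (near the slow soliton, near the fast soliton, and in between) evaluates explicit $3\times3$ Wronskian determinants of exponentials ($J_1,J_2$ and $\hat J_1,\hat J_2,\hat J_3$), using the signs of the interaction coefficients $\alpha_i,\beta,\hat\alpha,\hat\beta,\hat\gamma$ to conclude that $\tilde{W}r$ is strictly positive except for a single simple zero near $\xi_2\approx(\psi_2-h)/2$ in the slow-soliton core. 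Without an analogue of this computation (or of the corresponding arguments in \cite{MS93,LLZ25} adapted to these tau-functions), your proof establishes only a lower bound of one negative eigenvalue, not the claimed equality.
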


\begin{proof}
By lemmas \ref{le4.3} and
 \ref{le4.4}, it suffices to check the number of zeroes of the Wronskian determinant $Wr(x)$. However, it is very challenging to calculate $\mu_j=\frac{\partial \tilde{\mu}}{\partial y_{j0}}=\partial_{y_{j0}} (U^{(2)}-U^{(2)}_{xx})$ directly from the parameterization in (2.6) and (2.7). Thus, we seek a more convenient parametrization for $\frac{\partial \tilde{\mu}}{\partial y_{j0}}$.

Recall from (2.4) and (2.6) that $\tilde{\mu}=\frac{1}{p},\quad p=\frac{1}{\kappa}+2\left(\ln\!\left(\frac{g}{f}\right)\right)_y$. By using the parameterization outlined in Section \ref{2.1}, we can obtain
\begin{equation*}
 \mu_1=\frac{\partial \tilde{\mu}}{\partial y_{10}}=k_1\frac{\partial \tilde{\mu}}{\partial \xi_1}=-\frac{k_1}{p^2}p_{\xi_1},
\end{equation*}
and
\begin{equation*}
 \mu_2=\frac{\partial \tilde{\mu}}{\partial y_{20}}=k_2\frac{\partial \tilde{\mu}}{\partial \xi_2}=-\frac{k_2}{p^2}p_{\xi_2}.
\end{equation*}
Consequently, {{in view of $\frac{\partial x}{\partial y}=m$ (see (2.1))}}, we have
{\eqnn{
Wr(x)&:= \mu_1(\mu_2)_{x}-\mu_2(\mu_1)_x=\frac{k_1k_2}{mp^4}(p_{\xi_1}p_{\xi_2y}-p_{\xi_2}p_{\xi_1y}) \nonumber\\
&= \frac{4k_1k_2}{mp^4}\left[\left(\ln\!\left(\frac{g}{f}\right)\right)_{y\xi_1}\left(\ln\!\left(\frac{g}{f}\right)\right)_{y\xi_2y}-\left(\ln\!\left(\frac{g}{f}\right)\right)_{y\xi_2}\left(\ln\!\left(\frac{g}{f}\right)\right)_{y\xi_1y}\right].
}}
Thus we only need to compute the number of zeros of the following Wronskian
{\begin{equation}\label{4.6}
\tilde{W}r(y)\triangleq \left(\ln\!\left(\frac{g}{f}\right)\right)_{y\xi_1}\left(\ln\!\left(\frac{g}{f}\right)\right)_{y\xi_2y}-\left(\ln\!\left(\frac{g}{f}\right)\right)_{y\xi_2}\left(\ln\!\left(\frac{g}{f}\right)\right)_{y\xi_1y}.
\end{equation}}
Recall the explicit formula for the two tau-functions
\eqnn{
f&=1+e^{\xi_1}+e^{\xi_2}+e^{\xi_1+\xi_2+2h},\\
g&=1+e^{\xi_1-\psi_1}+e^{\xi_2-\psi_2}+e^{\xi_1+\xi_2-\psi_1-\psi_2+2h},
}
where
\begin{equation}\label{4.9}
 \xi_i=k_i(y-\tilde{c_i}\tau+y_{i0}),\quad \tilde{c_i}=\frac{2\kappa^3}{1-(\kappa k_i)^2},\;i=1,2,
\end{equation}
\begin{equation}\label{4.10}
 e^{-\psi_i}=\frac{1-\kappa k_i}{1+\kappa k_i},\;i=1,2,\quad e^{2h}=\left(\frac{k_1-k_2}{k_1+k_2}\right)^2.
\end{equation}

Now, we shall apply the large time asymptotics of the 2-soliton to simplify the
required calculations. In fact, as shown in Section \ref{2s}, $U^{(2)}$ is exponentially small except in the regions where one
 of the quantities $(y-\tilde{c}_i\tau)=O(1),\;i=1,2$. In other words, one can find that for large times $\tau$, the 2-soliton is closely approximated
by the sum of two single solitons each centered on regions which are far apart. Moreover, in each such regions only two terms in the tau-function $f$ and $g$ are of importance.
This helps us simplify the calculation of the Wronskian $\tilde{W}r(y)$. And then we start to compute the desired Wronskian in the asymptotic regions. \\

{\sl Case I: Assume that $\tau\gg 1$ and $(y-\tilde{c}_2\tau)=O(1)$.}
In this case, it follows from $\tilde{c}_1>\tilde{c}_2$ that $\xi_1=k_1(y-\tilde{c_1}\tau+y_{10})\rightarrow -\infty$. This
implies that $e^{\xi_1}\ll 1$, allowing us to omit many terms in \eqref{4.6}. In particular, we find that up to exponentially small terms
\begin{eqnarray}\label{4.11}
\left(\ln\!\left(\frac{g}{f}\right)\right)_{y\xi_1}&=& \left(\frac{(1+e^{\xi_2})(e^{\xi_1-\psi_1}+e^{\xi_1+\xi_2-\psi_1-\psi_2+2h})-(1+e^{\xi_2-\psi_2})(e^{\xi_1}+e^{\xi_1+\xi_2+2h})}{(1+e^{\xi_2})(1+e^{\xi_2-\psi_2})}\right)_y \nonumber\\
& \triangleq & (f_1)_y,
\end{eqnarray}
\begin{equation}\label{4.12}
\left(\ln\!\left(\frac{g}{f}\right)\right)_{y\xi_2}=\left(\frac{(e^{-\psi_2}-1)e^{\xi_2}}{(1+e^{\xi_2})(1+e^{\xi_2-\psi_2})}\right)_y\triangleq(f_2)_y.
\end{equation}
Thus, it follows from \eqref{4.6}, \eqref{4.11} and \eqref{4.12} that
\begin{equation}\label{4.13}
\tilde{W}r(y)=Wr_2((f_1)_y,(f_2)_y)+e.s.t..
\end{equation}
So we just need to estimate the number of zeros of second order Wronskian determinant $Wr_2((f_1)_y,(f_2)_y)$.

Note that the denominators of these two functions $f_1$ and $f_2$ are exactly the same, and then by the homogeneity property of Wronskians, we obtain
\begin{eqnarray}\label{4.14}
 Wr_2((f_1)_y,(f_2)_y)&:=&  Wr_3(1,f_1,f_2)\\
&=& (1+e^{\xi_2})^{-3}(1+e^{\xi_2-\psi_2})^{-3}Wr_3\left((1+e^{\xi_2})(1+e^{\xi_2-\psi_2}),g_1,g_2\right)\nonumber,
\end{eqnarray}
where
\begin{eqnarray*}
 g_1&:=&  (1+e^{\xi_2})(e^{\xi_1-\psi_1}+e^{\xi_1+\xi_2-\psi_1-\psi_2+2h})-(1+e^{\xi_2-\psi_2})(e^{\xi_1}+e^{\xi_1+\xi_2+2h})\nonumber\\
&=& (e^{-\psi_1}-1)(1+e^{2\xi_2-\psi_2+2h})e^{\xi_1}+(e^{-\psi_1}+e^{-\psi_1-\psi_2+2h}-e^{2h}-e^{-\psi_2})e^{\xi_1+\xi_2}\nonumber,\\
& &g_2:=(e^{-\psi_2}-1)e^{\xi_2}.
\end{eqnarray*}
 For the sake of convenience, we denote
\begin{equation}\label{4.16}
 \alpha_i:=e^{-\psi_i}-1,\;i=1,2,\quad \beta:=e^{-\psi_1}+e^{-\psi_1-\psi_2+2h}-e^{2h}-e^{-\psi_2}.
\end{equation}
Noting that $g_2=\alpha_2 e^{\xi_2}$, we find after some manipulations
\begin{eqnarray}\label{4.17}
& & Wr_3\left((1+e^{\xi_2})(1+e^{\xi_2-\psi_2}),g_1,g_2\right)\nonumber \\
&=& \alpha_2[\alpha_1Wr_3\left(1+e^{2\xi_2-\psi_2}, (1+e^{2\xi_2-\psi_2+2h})e^{\xi_1},e^{\xi_2}\right)\nonumber\\
& & +\beta Wr_3\left(1+e^{2\xi_2-\psi_2}, e^{\xi_1+\xi_2},e^{\xi_2}\right)]\\
&\triangleq & \alpha_2[\alpha_1J_1+\beta J_2].\nonumber
\end{eqnarray}
Recall that $e^h=\frac{k_1-k_2}{k_1+k_2}$, by using properties of determinants, one has
\begin{eqnarray*}
 J_1&=& Wr_3\left(1+e^{2\xi_2-\psi_2}, (1+e^{2\xi_2-\psi_2+2h})e^{\xi_1},e^{\xi_2}\right)\nonumber\\
&=& \left | \begin{matrix}
1+e^{2\xi_2-\psi_2}& (1+e^{2\xi_2-\psi_2+2h})e^{\xi_1} & e^{\xi_2}\\
2k_2e^{2\xi_2-\psi_2} & [k_1+(k_1+2k_2)e^{2\xi_2-\psi_2+2h}]e^{\xi_1} & k_2e^{\xi_2} \\
4k_2^2e^{2\xi_2-\psi_2} & [k_1^2+(k_1+2k_2)^2e^{2\xi_2-\psi_2+2h}]e^{\xi_1} & k_2^2e^{\xi_2} \\
\end{matrix} \right |\nonumber\\
&=& \left | \begin{matrix}
1+e^{2\xi_2-\psi_2}& (1+e^{2\xi_2-\psi_2+2h})e^{\xi_1} & e^{\xi_2}\\
k_2e^{2\xi_2-\psi_2}-k_2 & [k_1-k_2+(k_1+k_2)e^{2\xi_2-\psi_2+2h}]e^{\xi_1} & 0 \\
2k_2^2e^{2\xi_2-\psi_2} & [k_1(k_1-k_2)+(k_1+k_2)(k_1+2k_2)^2e^{2\xi_2-\psi_2+2h}]e^{\xi_1} & 0 \\
\end{matrix} \right |\nonumber\\
&=& \frac{k_1k_2(k_1-k_2)}{k_1+k_2}e^{\xi_1+\xi_2}[(k_1-k_2)e^{4\xi_2-2\psi_2}-2k_2e^{2\xi_2-\psi_2}-(k_1+k_2)]\\
&=& k_1k_2(k_1+k_2)e^{\xi_1+\xi_2+h}(e^{2\xi_2-\psi_2}+1)(e^{2\xi_2-\psi_2+h}-1).
\end{eqnarray*}
Similarly
\begin{eqnarray*}
 J_2 &=& Wr_3\left(1+e^{2\xi_2-\psi_2}, e^{\xi_1+\xi_2},e^{\xi_2}\right)\nonumber\\
&=& \left | \begin{matrix}
1+e^{2\xi_2-\psi_2}& e^{\xi_1+\xi_2} & e^{\xi_2}\\
2k_2e^{2\xi_2-\psi_2} & (k_1+k_2)e^{\xi_1+\xi_2} & k_2e^{\xi_2} \\
4k_2^2e^{2\xi_2-\psi_2} & (k_1+k_2)^2e^{\xi_1+\xi_2} & k_2^2e^{\xi_2} \\
\end{matrix} \right |\nonumber\\
&=& \left | \begin{matrix}
1+e^{2\xi_2-\psi_2}& e^{\xi_1+\xi_2} & e^{\xi_2}\\
k_2e^{2\xi_2-\psi_2}-k_2 & k_1e^{\xi_1+\xi_2} & 0 \\
2k_2^2e^{2\xi_2-\psi_2} & k_1(k_1+k_2)e^{\xi_1+\xi_2} & 0 \\
\end{matrix} \right |\nonumber\\
&=& k_1k_2e^{\xi_1+2\xi_2}[(k_1-k_2)e^{2\xi_2-\psi_2}-(k_1+k_2)]\\
&=& k_1k_2(k_1+k_2)e^{\xi_1+2\xi_2}(e^{2\xi_2-\psi_2+h}-1).
\end{eqnarray*}
Substituting the quantities $J_1$, $J_2$ above and \eqref{4.14} and \eqref{4.17} into \eqref{4.13}, we have
\begin{eqnarray*}
 \tilde{W}r(y)&=& \alpha_2(1+e^{\xi_2})^{-3}(1+e^{\xi_2-\psi_2})^{-3}(\alpha_1J_1+ \beta J_2)+e.s.t.\nonumber\\
&=& \alpha_2(1+e^{\xi_2})^{-3}(1+e^{\xi_2-\psi_2})^{-3}[\alpha_1k_1k_2(k_1+k_2)(e^{2\xi_2-\psi_2}+1)(e^{2\xi_2-\psi_2+h}-1)\nonumber\\
& & +\beta k_1k_2(k_1+k_2)e^{\xi_1+2\xi_2}(e^{2\xi_2-\psi_2+h}-1)]+e.s.t.\\
&=& \alpha_2k_1k_2(k_1+k_2)(1+e^{\xi_2})^{-3}(1+e^{\xi_2-\psi_2})^{-3}e^{\xi_1+\xi_2}[\alpha_1e^{h}(e^{2\xi_2-\psi_2}+1)\nonumber\\
& & +\beta e^{\xi_2}](q_2-1)+e.s.t.,\nonumber
\end{eqnarray*}
where $q_2\triangleq e^{2\xi_2-\psi_2+h}$ is always positive. Note that $\tilde{c}_1>\tilde{c}_2$, then one can readily confirm through \eqref{4.9}, \eqref{4.10} and \eqref{4.16} that
$$
k_1>k_2>0, \alpha_i<0,\;i=1,2, \beta<0.
$$
Consequently, one has
$$
\alpha_1e^{h}(e^{2\xi_2-\psi_2}+1)+\beta e^{\xi_2}<0.
$$
Thus, this implies that the leading term of $ \tilde{W}r(y)$ has a unique zero determined
by $q_2 = 1$.

{\sl Case II: Assume that $\tau\gg 1$ and $(y-\tilde{c}_1\tau)=O(1)$.}
In this case, it follows from $\tilde{c}_1>\tilde{c}_2$ that $\xi_2=k_2(y-\tilde{c_2}\tau+y_{20})\rightarrow +\infty$. This
implies that $e^{-\xi_2}\ll 1$, allowing us to omit many terms in \eqref{4.6}. In particular, we find that up to exponentially small terms
{ \begin{eqnarray}\label{4.19}
\left(\ln\!\left(\frac{g}{f}\right)\right)_{y\xi_1}&=& \left(\frac{(e^{-\psi_1}-1)e^{\xi_1+2h}}{(1+e^{\xi_1+2h})(1+e^{\xi_1-\psi_1+2h})}\right)_y \nonumber\\
& \triangleq & (\hat{f}_1)_y,
\end{eqnarray}
\begin{eqnarray}\label{4.20}
\left(\ln\!\left(\frac{g}{f}\right)\right)_{y\xi_2}&=& \left(\frac{[(1+e^{\xi_1})(1+e^{\xi_1-\psi_1+2h})-(1+e^{\xi_1-\psi_1})(e^{\psi_2}+e^{\xi_1+\psi_2+2h})]e^{-\xi_2}}{(1+e^{\xi_1+2h})(1+e^{\xi_1-\psi_1+2h})}\right)_y \nonumber\\
& \triangleq & (\hat{f}_2)_y.
\end{eqnarray}}
Thus, it follows from \eqref{4.6}, \eqref{4.19} and \eqref{4.20} that
\begin{equation}\label{4.21}
\tilde{W}r(y)=Wr_2((\hat{f}_1)_y,(\hat{f}_2)_y)+e.s.t..
\end{equation}
Similarly, by the homogeneity property of Wronskians, we obtain
\begin{eqnarray}\label{4.22}
 Wr_2((\hat{f}_1)_y,(\hat{f}_2)_y)&:=&  Wr_3(1,\hat{f}_1,\hat{f}_2)\\
&=& (1+e^{\xi_1+2h})^{-3}(1+e^{\xi_1-\psi_1+2h})^{-3}Wr_3\left((1+e^{\xi_1+2h})(1+e^{\xi_1-\psi_1+2h}),\hat{g}_1,\hat{g}_2\right)\nonumber,
\end{eqnarray}
where
\eqnn{
  \hat{g}_1&:=(e^{-\psi_1+2h}-e^{2h})e^{\xi_1},\\
\hat{g}_2&:= [(1+e^{\xi_1})(1+e^{\xi_1-\psi_1+2h})-(1+e^{\xi_1-\psi_1})(e^{\psi_2}+e^{\xi_1+\psi_2+2h})]e^{-\xi_2} \nonumber\\
&= (1-e^{\psi_2})e^{-\xi_2}+(1+e^{-\psi_1+2h}-e^{\psi_2+2h}-e^{\psi_2-\psi_1})e^{\xi_1-\xi_2}+(e^{-\psi_1+2h}-e^{-\psi_1+\psi_2+2h})e^{2\xi_1-\xi_2}.\nonumber
}
 For the sake of convenience, we denote
\begin{equation}\label{4.24}
 \hat{\alpha}:=e^{-\psi_1+2h}-e^{2h},\quad \hat{\beta}:=1-e^{\psi_2},
\end{equation}
and
\begin{equation}\label{4.25}
 \hat{\gamma}:=1+e^{-\psi_1+2h}-e^{\psi_2+2h}-e^{\psi_2-\psi_1},\quad \hat{\delta}:=e^{-\psi_1+2h}-e^{-\psi_1+\psi_2+2h}.
\end{equation}

Noting that $\hat{g}_1=\hat{\alpha} e^{\xi_1}$, we find after some manipulations
\begin{eqnarray}\label{4.26}
& & Wr_3\left((1+e^{\xi_1+2h})(1+e^{\xi_1-\psi_1+2h}),\hat{g}_1,\hat{g}_2\right)\nonumber \\
&=& \hat{\alpha}[\hat{\beta}Wr_3\left(1+e^{2\xi_1-\psi_1+4h}, e^{\xi_1},e^{-\xi_2}\right)+\hat{\gamma}Wr_3\left(1+e^{2\xi_1-\psi_1+4h}, e^{\xi_1},e^{\xi_1-\xi_2}\right)\nonumber\\
& & +\hat{\delta}Wr_3\left(1+e^{2\xi_1-\psi_1+4h}, e^{\xi_1},e^{2\xi_1-\xi_2}\right)]\\
&\triangleq & \hat{\alpha}[\hat{\beta}\hat{J}_1+\hat{\gamma} \hat{J}_2+\hat{\delta} \hat{J}_3].\nonumber
\end{eqnarray}
Recall that $e^h=\frac{k_1-k_2}{k_1+k_2}$, by using properties of determinants, one has
\begin{eqnarray*}
 \hat{J}_1&=& Wr_3\left(1+e^{2\xi_1-\psi_1+4h}, e^{\xi_1},e^{-\xi_2}\right)\nonumber\\
&=& \left | \begin{matrix}
1+e^{2\xi_1-\psi_1+4h}& e^{\xi_1} & e^{-\xi_2}\\
2k_1e^{2\xi_1-\psi_1+4h} & k_1e^{\xi_1} & -k_2e^{-\xi_2} \\
4k_1^2e^{2\xi_1-\psi_1+4h} & k_1^2e^{\xi_1} & k_2^2e^{-\xi_2} \\
\end{matrix} \right |\nonumber\\
&=& \left | \begin{matrix}
1+e^{2\xi_2-\psi_2}& e^{\xi_1} & e^{-\xi_2}\\
k_1e^{2\xi_1-\psi_1+4h}-k_1 & 0 & -(k_1+k_2)e^{-\xi_2} \\
2k_1^2e^{2\xi_1-\psi_1+4h} & 0 & k_2(k_1+k_2)e^{-\xi_2}\\
\end{matrix} \right |\nonumber\\
&=& -(k_1+k_2)e^{\xi_1-\xi_2}[(k_1k_2+2k_1^2)e^{2\xi_1-\psi_1+4h}-k_1k_2].
\end{eqnarray*}
Similarly
\begin{eqnarray*}
 \hat{J}_2&=& Wr_3\left(1+e^{2\xi_1-\psi_1+4h}, e^{\xi_1},e^{\xi_1-\xi_2}\right)\nonumber\\
&=& \left | \begin{matrix}
1+e^{2\xi_1-\psi_1+4h}& e^{\xi_1} & e^{\xi_1-\xi_2}\\
2k_1e^{2\xi_1-\psi_1+4h} & k_1e^{\xi_1} & (k_1-k_2)e^{\xi_1-\xi_2} \\
4k_1^2e^{2\xi_1-\psi_1+4h} & k_1^2e^{\xi_1} & (k_1-k_2)^2e^{\xi_1-\xi_2} \\
\end{matrix} \right |\nonumber\\
&=& \left | \begin{matrix}
1+e^{2\xi_2-\psi_2}& e^{\xi_1} & e^{\xi_1-\xi_2}\\
k_1e^{2\xi_1-\psi_1+4h}-k_1 & 0 & -k_2e^{\xi_1-\xi_2} \\
2k_1^2e^{2\xi_1-\psi_1+4h} & 0 & -k_2(k_1-k_2)e^{\xi_1-\xi_2}\\
\end{matrix} \right |\nonumber\\
&=& -k_1k_2e^{2\xi_1-\xi_2}[(k_1+k_2)e^{2\xi_1-\psi_1+4h}+(k_1-k_2)].
\end{eqnarray*}
\begin{eqnarray*}
 \hat{J}_3&=& Wr_3\left(1+e^{2\xi_1-\psi_1+4h}, e^{\xi_1},e^{2\xi_1-\xi_2}\right)\nonumber\\
&=& \left | \begin{matrix}
1+e^{2\xi_1-\psi_1+4h}& e^{\xi_1} & e^{2\xi_1-\xi_2}\\
2k_1e^{2\xi_1-\psi_1+4h} & k_1e^{\xi_1} & (2k_1-k_2)e^{2\xi_1-\xi_2} \\
4k_1^2e^{2\xi_1-\psi_1+4h} & k_1^2e^{\xi_1} & (2k_1-k_2)^2e^{2\xi_1-\xi_2} \\
\end{matrix} \right |\nonumber\\
&=& \left | \begin{matrix}
1+e^{2\xi_2-\psi_2}& e^{\xi_1} & e^{2\xi_1-\xi_2}\\
k_1e^{2\xi_1-\psi_1+4h}-k_1 & 0 & (k_1-k_2)e^{2\xi_1-\xi_2} \\
2k_1^2e^{2\xi_1-\psi_1+4h} & 0 & (k_1-k_2)(2k_1-k_2)e^{2\xi_1-\xi_2}\\
\end{matrix} \right |\nonumber\\
&=& -k_1(k_1-k_2)e^{3\xi_1-\xi_2}[(k_2-2k_1)-k_2e^{2\xi_1-\psi_1+4h}].
\end{eqnarray*}
It is observed from \eqref{4.24} and \eqref{4.25} that
$$\hat{\delta}=(1-e^{\psi_2})e^{-\psi_1+2h}=\hat{\beta}e^{-\psi_1+2h}.
$$
Consequently, noting that $e^{2h}=(\frac{k_1-k_2}{k_1+k_2})^2$, one has
\begin{eqnarray}\label{4.27}
& & \hat{\beta}\hat{J}_1+\hat{\gamma}\hat{J}_2+\hat{\delta}\hat{J}_3\nonumber \\
&=& -\hat{\beta}(k_1+k_2)e^{\xi_1-\xi_2}[(k_1k_2+2k_1^2)e^{2\xi_1-\psi_1+4h}-k_1k_2]-\hat{\beta}(k_1-k_2)e^{\xi_1-\xi_2}e^{2\xi_1-\psi_1+2h}\nonumber\\
& & \cdot[(k_1k_2-2k_1^2)-k_1k_2e^{2\xi_1-\psi_1+4h}]-\hat{\gamma}k_1k_2e^{2\xi_1-\xi_2}[(k_1+k_2)e^{2\xi_1-\psi_1+4h}+(k_1-k_2)]\nonumber\\
&=& \hat{\beta}e^{\xi_1-\xi_2}\cdot \frac{k_1k_2(k_1+k_2)}{k_1-k_2} (e^{2\xi-\psi_1+4h}+1)[(k_1+k_2)e^{2\xi-\psi_1+4h}+(k_1-k_2)]\nonumber\\
& & -\hat{\gamma}k_1k_2e^{2\xi_1-\xi_2}[(k_1+k_2)e^{2\xi_1-\psi_1+4h}+(k_1-k_2)]\nonumber\\
&=& k_1k_2e^{\xi_1-\xi_2}[(k_1+k_2)e^{2\xi_1-\psi_1+4h}+(k_1-k_2)]\cdot [\hat{\beta}(e^{2\xi_1-\psi_1+3h}+e^{-h})-\hat{\gamma}e^{\xi_1}].
\end{eqnarray}
Substituting \eqref{4.22}, \eqref{4.26} and \eqref{4.27} into \eqref{4.21}, we have
\begin{eqnarray}\label{4.18}
& & \tilde{W}r(y)\nonumber \\
&=& (1+e^{\xi_1+2h})^{-3}(1+e^{\xi_1-\psi_1+2h})^{-3}\hat{\alpha}k_1k_2e^{\xi_1-\xi_2}[(k_1+k_2)e^{2\xi_1-\psi_1+4h}+(k_1-k_2)]\nonumber\\
& & \cdot [\hat{\beta}(e^{2\xi_1-\psi_1+3h}+e^{-h})-\hat{\gamma}e^{\xi_1}]+e.s.t..\nonumber
\end{eqnarray}
Note that $\tilde{c}_1>\tilde{c}_2$, then one can readily confirm through \eqref{4.9}, \eqref{4.10}, \eqref{4.24}, and \eqref{4.25} that
$$
k_1>k_2>0, \quad \hat{\alpha}=e^{2h}(e^{-\psi_1}-1)<0,\quad \hat{\beta}=1-e^{\psi_2}<0,
$$
and
$$
\hat{\gamma}=1+e^{-\psi_1+2h}-e^{\psi_2+2h}-e^{\psi_2-\psi_1}=\frac{4\kappa k_2(k_1-k_2)}{(1+\kappa k_1)(1-\kappa k_2)(k_1+k_2)}>0.
$$
Thus, this implies that $ \tilde{W}r(y)$ is always positive.

{\sl Case III: Assume that $(y-\tilde{c}_2\tau)$ is large and positive and $(y-\tilde{c}_1\tau)$ is large and negative.}
In this case, it implies that $e^{\xi_1}\ll 1$ and $e^{-\xi_2}\ll 1$. Thus, we can find that
\begin{equation*}
\left(\ln\!\left(\frac{g}{f}\right)\right)_{y\xi_1}= k_1(e^{-\psi_1}-1)e^{\xi_1+2h}+e.s.t.,
\end{equation*}
 \begin{equation*}
\left(\ln\!\left(\frac{g}{f}\right)\right)_{y\xi_2}= k_2(e^{\psi_2}-1)e^{-\xi_2}+e.s.t..
\end{equation*}
Consequently,
\begin{equation*}
\left(\ln\!\left(\frac{g}{f}\right)\right)_{y\xi_1y}= k_1^2(e^{-\psi_1}-1)e^{\xi_1+2h}+e.s.t.,
\end{equation*}
 \begin{equation*}
\left(\ln\!\left(\frac{g}{f}\right)\right)_{y\xi_2y}= -k_2^2(e^{\psi_2}-1)e^{-\xi_2}+e.s.t..
\end{equation*}
Note that $e^{\psi_2}-1>0$ and $1-e^{-\psi_1}>0$, it then follows that
{ \begin{eqnarray*}
\tilde{W}r(y)&=& \left(\ln\!\left(\frac{g}{f}\right)\right)_{y\xi_1}\left(\ln\!\left(\frac{g}{f}\right)\right)_{y\xi_2y}-\left(\ln\!\left(\frac{g}{f}\right)\right)_{y\xi_2}\left(\ln\!\left(\frac{g}{f}\right)\right)_{y\xi_1y} \nonumber\\
&=& k_1k_2(k_1+k_2)(e^{\psi_2}-1)(1-e^{-\psi_1})e^{\xi_1-\xi_2+2h}+e.s.t.,
\end{eqnarray*}}
which is always positive.

In summary, the Wronskian $\tilde{W}r(y)$ changes sign only once, and that for large positive
times, the zero occurs approximately near $\xi_2\approx \frac{\psi_2-h}{2}$. This completes the proof of Proposition \ref{pro4.1}.
\end{proof}

\section{2-soliton as a constrained minimizer}
\label{5s}
In this section, we aim to show that
$\hat{\mu}$ is a constrained (non-isolated) minimizer of $F$, which implies stability by the result of \cite{MS93}.

 Note that the spectral properties of $\mathcal{L}$ in Section 4 have already shown that

(a) The essential spectrum of $\mathcal{L}$ is bounded below by a positive constant.

(b) $\mathcal{L}$ has only one negative eigenvalue and its kernel is two-dimensional,
spanned by the tangent space of the family of critical points. \\
This also implies that the 2-soliton profiles $\tilde{\mu}$ are nondegenerate critical points of the Lagrangian $F$.
However, it is well known that $\tilde{\mu}$ may be a local constrained minimum while $\mathcal{L}(\tilde{\mu})$ has negative eigenvalues.
Fortunately, for nondegenerate critical points, it was shown by Hestenes \cite{He66, He75} that the condition
\textcolor{black}
{\begin{equation}\label{5.1}
 \langle \eta,\mathcal{L}\eta\rangle>0,\quad \forall \eta\in\{\eta\in L^2: \langle \eta,\frac{\delta F_i}{\delta m}(\tilde{\mu})\rangle=0,i=1,2, \eta\in R(\mathcal{L})\}
\end{equation}}
 is sufficient to guarantee that $\tilde{\mu}$ is a constrained (non-isolated) minimizer, where $R(\mathcal{L})$ is the range of the operator $\mathcal{L}$ in $L^2$.
 However, the direct verification of the condition is often extremely intricate. In order to address this difficulty, we need to use an indirect approach developed by Maddocks and Sachs (see
\cite[Lemma 2.1]{MS93}).

\begin{proposition} \label{pro5.1}
Consider the functional
 \begin{equation*}
F(m)=F_3(m)+\lambda_1F_1(m)+\lambda_2F_2(m).
\end{equation*}
If a nondegenerate extremal $m$ can be embedded in a family of extremals of $F$, then the constrained second-order condition {\eqref{5.1}} is satisfied
if and only if the number of positive eigenvalues of the Hessian matrix
\eq{
M:=\left\{\frac{\partial^2 F(\tilde{\mu})}{\partial\lambda_i\partial\lambda_j}\right\}_{2\times 2}
}{M}
equals to the number of negative eigenvalues of $\mathcal{L}:=\frac{\delta^2 F}{\delta m^2}(\tilde{\mu})$.
\end{proposition}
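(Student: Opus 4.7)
The plan is to invoke the Maddocks--Sachs framework \cite[Lemma 2.1]{MS93}, which provides precisely this finite-dimensional inertia criterion for a nondegenerate critical point embedded in a smooth family of extremals. Lemma \ref{le3.1} supplies exactly such an embedded family $\tilde{\mu}(\lambda_1,\lambda_2)$, and Sections \ref{3s}--\ref{4s} have already verified the remaining hypotheses (nondegeneracy, finitely many negative eigenvalues, and a positive essential spectrum for $\mathcal{L}$). The argument then reduces to three computational steps.

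First, I would differentiate the identity $G(\tilde{\mu}(\lambda_1,\lambda_2))\equiv 0$ with respect to each $\lambda_j$. Since $G=G_3+\lambda_1 G_1+\lambda_2 G_2$, this produces
\begin{equation*}
\mathcal{L}\,\partial_{\lambda_j}\tilde{\mu}=-G_j(\tilde{\mu}),\qquad j=1,2.
\end{equation*}
Translation invariance of each $F_j$ forces $\langle G_j(\tilde{\mu}),\mu_i\rangle=0$, so Lemma \ref{le4.3} gives $G_j(\tilde{\mu})\in R(\mathcal{L})=\ker(\mathcal{L})^{\perp}$, and the pseudo-inverse $\mathcal{L}^{\dagger}G_j(\tilde{\mu})$ is well-defined modulo $\ker \mathcal{L}$. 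The envelope theorem then yields $\partial_{\lambda_i}F(\tilde{\mu})=F_i(\tilde{\mu})$, and a second differentiation gives the explicit formula
\begin{equation*}
M_{ij}=\langle G_i(\tilde{\mu}),\partial_{\lambda_j}\tilde{\mu}\rangle=-\bigl\langle G_i(\tilde{\mu}),\mathcal{L}^{\dagger}G_j(\tilde{\mu})\bigr\rangle,
\end{equation*}
which is symmetric by self-adjointness of $\mathcal{L}$.

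Second, I would apply the classical inertia identity for a self-adjoint operator with finitely many negative eigenvalues: for $V:=\Span\{G_1(\tilde{\mu}),G_2(\tilde{\mu})\}\subset R(\mathcal{L})$,
\begin{equation*}
n_-\bigl(\mathcal{L}|_{V^{\perp}\cap R(\mathcal{L})}\bigr)=n_-(\mathcal{L})-n_+(M),
\end{equation*}
the sign change reflecting the minus sign in the formula for $M_{ij}$ above. Condition \eqref{5.1} is exactly the assertion that the left-hand side vanishes, so it holds if and only if $n_+(M)=n_-(\mathcal{L})$, which is the desired equivalence.

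The main obstacle is not computational but one of bookkeeping and functional-analytic rigor: one must carefully track sign conventions so that the orientation of the inertia identity matches $M$ as defined in \eqref{M}, and justify the spectral calculus needed to define $\mathcal{L}^{\dagger}$ on $R(\mathcal{L})$ in the infinite-dimensional setting. The positive lower bound on $\sigma_{\mathrm{ess}}(\mathcal{L})$ from Lemma \ref{le4.2}, combined with the finiteness of $n_-(\mathcal{L})$ from Proposition \ref{pro4.1}, ensures that $\mathcal{L}^{\dagger}$ is bounded on $R(\mathcal{L})$, so a spectral decomposition reduces the inertia claim to the finite-dimensional linear algebra lemma already carried out in \cite{MS93}. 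Consequently, no additional PDE analysis is needed beyond what has been assembled in the preceding sections.
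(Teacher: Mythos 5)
Your proposal is correct and follows essentially the same route as the paper, which states this proposition as a direct citation of \cite[Lemma 2.1]{MS93} and offers no further proof; your reconstruction of the internal argument (differentiating $G(\tilde{\mu})=0$ in $\lambda_j$ to get $\mathcal{L}\,\partial_{\lambda_j}\tilde{\mu}=-G_j(\tilde{\mu})$, identifying $M_{ij}=-\langle G_i(\tilde{\mu}),\mathcal{L}^{\dagger}G_j(\tilde{\mu})\rangle$, and applying the inertia identity) accurately reproduces the content of that lemma. The only small imprecision is attributing $\langle G_j(\tilde{\mu}),\mu_i\rangle=0$ to translation invariance alone: spatial translation only yields orthogonality to $\mu_1+\mu_2$, and orthogonality to each $\mu_i$ separately requires that $F_j(\tilde{\mu})$ be independent of each phase $y_{i0}$ individually, which follows from conservation in time together with the fact that time evolution and spatial translation jointly generate the full two-parameter phase family (since $\tilde{c}_1\neq\tilde{c}_2$).
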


 Based on Proposition \ref{pro5.1}, we show that the 2-soliton solution is a constraint minimizer (i.e. Condition \ref{5.1} is satisfied) by proving the following lemma.

\begin{lemma} \label{le5.1}
At the values
\eq{\lambda_1&=-\frac{1}{16\kappa^4}+\frac{1}{2\kappa^2}\left(\frac{1}{c_1-\kappa^2}+\frac{1}{c_2-\kappa^2}\right)+\frac{2}{(c_1-\kappa^2)(c_2-\kappa^2)},\\
\lambda_2&=-\left(\frac{1}{4\kappa^2}+\frac{1}{c_1-\kappa^2}+\frac{1}{c_2-\kappa^2}\right),}{l1l2}
the Hessian matrix defined in \eqref{M}
has precisely one positive eigenvalue and one negative eigenvalue.
\end{lemma}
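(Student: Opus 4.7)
Since $M$ is a real symmetric $2 \times 2$ matrix, having exactly one positive and one negative eigenvalue is equivalent to $\det M < 0$, and this is what I would establish. The plan is to change variables from the multipliers $(\lambda_1, \lambda_2)$ to the speeds $(c_1, c_2)$, express $\det M$ as a ratio of two Jacobian determinants, and use the variational identity \eqref{G3G2G1} together with the asymptotic decoupling of the 2-soliton to determine the sign of each.

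The envelope theorem applied at the critical point $\tilde{\mu}$ yields $\partial F(\tilde{\mu})/\partial \lambda_i = F_i(\tilde{\mu})$, so that $M_{ij} = \partial F_j(\tilde{\mu})/\partial \lambda_i$. Viewing the $\lambda_i$ as functions of $(c_1, c_2)$ through \eqref{l1l2}, the chain rule gives $M = P L^{-1}$, with $P_{jk} = \partial F_j(\tilde{\mu})/\partial c_k$ and $L_{ij} = \partial \lambda_i/\partial c_j$. Setting $a = c_1 - \kappa^2$ and $b = c_2 - \kappa^2$, a direct computation from \eqref{l1l2} gives $\det L = 2(b-a)/(a^3 b^3)$, which is strictly negative since $c_1 > c_2$, and in particular ensures that the change of variables is well-defined.

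For $P$, the asymptotic results of Section \ref{2s} together with the conservation of the $F_j$ give the exact identity $F_j(\tilde{\mu}) = F_j(\mu_{c_1}) + F_j(\mu_{c_2})$: the right-hand side is the $t\to +\infty$ limit (the cross-interactions being exponentially small, by locality of the densities), while the left-hand side is $t$-independent. Writing $f_j(c) := F_j(\mu_c)$, this makes $P_{jk} = f_j'(c_k)$, so that
\begin{equation*}
\det P = f_1'(c_1) f_2'(c_2) - f_1'(c_2) f_2'(c_1).
\end{equation*}
I would then apply the variational identity $G_2(\mu_c) = \alpha(c)\, G_1(\mu_c)$ from \eqref{G3G2G1}, where $\alpha(c) = (c+3\kappa^2)/(2\kappa^2(c-\kappa^2))$. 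Differentiating $f_j(c)$ in $c$ and applying this identity inside the integral gives $f_2'(c) = \int_{\mathbb{R}} G_2(\mu_c)\,\partial_c \mu_c\,dx = \alpha(c)\, f_1'(c)$, so that
\begin{equation*}
\det P = f_1'(c_1)\, f_1'(c_2)\,\bigl(\alpha(c_2) - \alpha(c_1)\bigr).
\end{equation*}
Since $\alpha'(c) = -2/(c-\kappa^2)^2 < 0$, the factor $\alpha(c_2) - \alpha(c_1)$ is strictly positive for $c_1 > c_2$.

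It then suffices to verify $f_1'(c) > 0$ on the admissible range. Using the reciprocal transformation of Section \ref{2.1s} with $dx = p\,dy$ and $p = 1/\kappa + 2(\ln(g/f))_y$, direct computation gives
\begin{equation*}
f_1(c) = \int_{\mathbb{R}} (\mu_c - \kappa)\,dx = -2\kappa\bigl[\ln(g/f)\bigr]_{-\infty}^{+\infty} = 2\kappa\,\psi(c),
\end{equation*}
where $\psi = \ln\bigl((1+\kappa k)/(1-\kappa k)\bigr)$ and $c = \kappa^2(3-(\kappa k)^2)/(1-(\kappa k)^2)$. On $c \in (3\kappa^2, 9\kappa^2)$ the parameter $\kappa k$ ranges over $(0, \sqrt{3}/2)$, and both $c \mapsto (\kappa k)^2$ and $(\kappa k)^2 \mapsto \psi$ are strictly increasing, so $f_1'(c) > 0$ throughout. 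Combining, $\det P > 0$ and therefore $\det M = \det P/\det L < 0$, which gives the claimed signature. The most delicate step I anticipate is the promotion of the asymptotic decoupling to the \emph{exact} equality $F_j(\tilde{\mu}) = F_j(\mu_{c_1}) + F_j(\mu_{c_2})$; this hinges on locality of the densities and on conservation, and would also require justifying that $\partial_{c_k}$ commutes with the $t \to +\infty$ limit via the explicit $c_k$-dependence in the tau-function representation of Section \ref{2.1s}.
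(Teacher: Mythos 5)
Your overall route is genuinely different from the paper's and, once one step is repaired, it is sound. The paper proves the lemma by computing $F_1(\mu_c)$ and $F_2(\mu_c)$ in closed form (the latter via a lengthy explicit integration using the substitution $z=\sqrt{1-\varphi}$), summing over the two constituent solitons, and then evaluating $\det M$ directly, obtaining $\det M=-\frac{16}{\kappa^2}\sqrt{(c_1-\kappa^2)(c_2-\kappa^2)(c_1-3\kappa^2)(c_2-3\kappa^2)}<0$. You instead exploit the variational identity \eqref{G3G2G1} to get $f_2'(c)=\alpha(c)f_1'(c)$ with $\alpha(c)=\frac{c+3\kappa^2}{2\kappa^2(c-\kappa^2)}$, which collapses $\det P$ to $f_1'(c_1)f_1'(c_2)\bigl(\alpha(c_2)-\alpha(c_1)\bigr)$ and reduces the whole lemma to the single scalar fact $f_1'(c)>0$ plus the monotonicity of $\alpha$. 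This is slicker and avoids the explicit evaluation of $F_2(\mu_c)$ entirely; I checked that your $\det L=2(b-a)/(a^3b^3)$ is correct and that your formula for $\det M$ reproduces the paper's value exactly.

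There is, however, one concrete error: the identity $f_1(c)=F_1(\mu_c)=\int_{\mathbb{R}}(\mu_c-\kappa)\,dx$ is false. That integral is $E_1(\mu_c)$, whereas $F_1=E_2+\kappa^{-2}E_1$, i.e.
\begin{equation*}
F_1(m)=\int_{\mathbb{R}}\left(\frac{1}{m}+\frac{m}{\kappa^2}-\frac{2}{\kappa}\right)dx=\int_{\mathbb{R}}\frac{(m-\kappa)^2}{\kappa^2 m}\,dx,
\end{equation*}
which under the reciprocal transformation becomes $4\int_{\mathbb{R}}\bigl[(\ln(g/f))_y\bigr]^2\,dy$ --- not a boundary term, so your ``direct computation'' yielding $2\kappa\psi(c)$ evaluates the wrong functional ($E_1(\mu_c)=2\kappa\psi$, while $F_1(\mu_c)=\tfrac{4}{\kappa}\psi-8k$). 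The conclusion you need survives: from the closed form of $F_1(\mu_c)$ quoted at the start of Section \ref{5s} one computes $\frac{d}{dc}F_1(\mu_c)=\frac{4\sqrt{c-3\kappa^2}}{\kappa(c-\kappa^2)^{3/2}}>0$ on $(3\kappa^2,9\kappa^2)$, so $\det P>0$ and $\det M<0$ as you claim. But as written that step does not establish $f_1'(c)>0$, and it must be replaced by the correct evaluation of $F_1(\mu_c)$ (or its derivative). Your caveat about promoting the asymptotic decoupling to the exact identity $F_j(\tilde{\mu})=F_j(\mu_{c_1})+F_j(\mu_{c_2})$ is fair, but the paper relies on exactly the same step with no more justification, so you are not at a disadvantage there.
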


\begin{proof}
 Firstly, we compute the invariants $F_1$ and $F_2$ at the single soliton $\mu_c$.
The explicit forms of $F_1(\mu_c)$ can be found in \cite{DLL25}
 \eqnn{
F_1(\mu_c)=\frac{8}{\kappa}\left[\ln\frac{\sqrt{c-\kappa^2}+\sqrt{c-3\kappa^2}}{\sqrt{2}\kappa}-\sqrt{\frac{c-3\kappa^2}{c-\kappa^2}}\right].
}
We now provide the computation of $F_2(\mu_c)$. In view of \eqref{3.1}, \eqref{3.13} and \eqref{3.14}, one has
\begin{eqnarray}\label{5.3}
 F_2(\mu_c) &=& \int_{\mathbb{R}}\left(\frac{\mu_{c,x}^2}{\mu_c^5}+\frac{1}{4\mu_c^3}+\frac{3}{4\kappa^4}\mu_c-\frac{1}{\kappa^3}\right)\,dx \\
&=& \int_{\mathbb{R}}\left[\left(4A^{-2}\phi_{c,x}^2+\frac{3}{4\kappa^4}\right)\cdot \frac{A}{\sqrt{E-4A\phi_c}}+\frac{(E-4A\phi_c)^{\frac{3}{2}}}{4A^3}-\frac{1}{\kappa^3}\right]\,dx. \nonumber
\end{eqnarray}

We rescale the smooth solitary wave solutions $\phi_c$ as
\begin{equation}\label{5.4}
 \phi_c=\kappa+\frac{c-\kappa^2}{4\kappa} \varphi.
\end{equation}
 By Lemma \ref{le3.1} in \cite{DLL25}, we have
\eq{\varphi\in (0,\varphi_0]\subset(0,1),\;\;\varphi_0:=\sup_{\xi\in\mathbb{R}}{(\varphi)}=\frac{4\kappa\left(\sqrt{2(c-\kappa^2)}-2\kappa\right)}{c-\kappa^2},}{p0def}
 and
it is easily seen that $\varphi_0\rightarrow 1$ as $\kappa\rightarrow \frac{\sqrt{c}}{3}$. Recall that $A=\kappa(c-\kappa^2), E=(c-\kappa^2)(c+3\kappa^2)$, and substituting \eqref{5.4} into
\eqref{3.10}, we have\textcolor{black}{
\eqnn{
\varphi_{x}^2=\varphi^2-r(1-\sqrt{1-\varphi})^2,
}
where $r=\frac{8k^2}{c-k^2}$ and $\varphi\in(0,1)$. Note that for $x\in(-\infty,0)$, $\varphi_x>0$, so we have
 \begin{equation}\label{5.6}
 \varphi_{x}=\sqrt{\varphi^2-r\left(1-\sqrt{1-\varphi}\right)^2},\quad x\in(-\infty,0).
\end{equation}
Consequently, substituting \eqref{5.4} and \eqref{5.6} into \eqref{5.3} leads to
\begin{eqnarray*}
 F_2(\mu_c) &=& \frac{1}{2\kappa^3}\int_{-\infty}^{0}\left[\frac{3+\varphi^2-r(1-\sqrt{1-\varphi})^2+(1-\varphi)^2-4\sqrt{1-\varphi}}{\sqrt{1-\varphi}}\right]\,dx \\
&=& \frac{1}{2\kappa^3}\int_{0}^{\varphi_0}\left[\frac{3+\varphi^2-r(1-\sqrt{1-\varphi})^2+(1-\varphi)^2-4\sqrt{1-\varphi}}{\sqrt{1-\varphi}}\right]\\
& &\cdot \frac{d\varphi}{\sqrt{\varphi^2-r(1-\sqrt{1-\varphi})^2}}. \nonumber
\end{eqnarray*}
Making the change of variable
$$ z\triangleq \sqrt{1-\varphi}, \quad \varphi=1-z^2,\quad d\varphi=-2zdz,$$
with $r=\frac{8\kappa^2}{c-\kappa^2}$ and  $\varphi_0$ given in \eqref{p0def}, the expression of $ F_2(\mu_c)$ becomes
 \begin{eqnarray*}
 F_2(\mu_c) &=& \frac{1}{2\kappa^3}\int_{1}^{\sqrt{1-\varphi_0}}\left[\frac{3+(1-z^2)^2-r(1-z)^2+z^4-4z}{z}\right]\cdot \frac{-2z}{\sqrt{(1-z^2)^2-r(1-z)^2}}dz \nonumber\\
&=&   \frac{1}{\kappa^3}\int_{\sqrt{1-\varphi_0}}^{1}\left[\frac{4-r(1-z)-2z^2(1+z)}{\sqrt{(1+z)^2-r}}\right]dz\nonumber\\
&=&   \frac{1}{\kappa^3}\Big[4\ln(z+1+\sqrt{(z+1)^2-r})-(r-2z^2-2(z+1))\sqrt{(z+1)^2-r}\nonumber\\
& & +\frac{4}{3}((z+1)^2-r)^{\frac{3}{2}}\Big]\Bigg|_{z=\sqrt{1-\varphi_0}}^{z=1}\\
&=&   \frac{1}{\kappa^3}\left[4\ln(2+\sqrt{4-r})+(r-6)\sqrt{4-r}+\frac{4}{3}(4-r)^{\frac{3}{2}}-4\ln(1+\sqrt{1-\varphi_0})\right]\nonumber\\
&=&  \frac{4}{\kappa^3}\left[\ln\!\left(\frac{\sqrt{c-\kappa^2}+\sqrt{c-3\kappa^2}}{\sqrt{2}\kappa}\right)-\frac{c+3\kappa^2}{3(c-\kappa^2)}\sqrt{\frac{c-3\kappa^2}{c-\kappa^2}}\right].
\end{eqnarray*}} 

Since both $F_1$ and $F_2$ are time-invariant, we can exploit the
asymptotic form of the 2-soliton in the limit $t\rightarrow \infty$ as being exponentially close
to a sum of two 1-solitons infinitely far apart. Consequently, the invariant functionals $F_i(\tilde{\mu})$ can be computed as the sum of the invariants for the two single solitons $\mu_{c_1}$ and $\mu_{c_2}$.
That is,
\eqnn{
F_i(\tilde{\mu})=F_i(\mu_{c_1})+F_i(\mu_{c_2}), \quad i=1,2.
}
Thus
\begin{eqnarray}\label{5.8}
 F_1(\tilde{\mu}) &=& \frac{8}{\kappa}\Bigg[\ln\left(\frac{\sqrt{c_1-\kappa^2}+\sqrt{c_1-3\kappa^2}}{\sqrt{2}\kappa}\right)-\sqrt{\frac{c_1-3\kappa^2}{c_1-\kappa^2}}\nonumber\\
& & +\ln\left(\frac{\sqrt{c_2-\kappa^2}+\sqrt{c_2-3\kappa^2}}{\sqrt{2}\kappa}\right)-\sqrt{\frac{c_2-3\kappa^2}{c_2-\kappa^2}}\Bigg],
\end{eqnarray}
and
\begin{eqnarray}\label{5.9}
 F_2(\tilde{\mu}) &=& \frac{4}{\kappa^3}\Bigg[\ln\!\left(\frac{\sqrt{c_1-\kappa^2}+\sqrt{c_2-3\kappa^2}}{\sqrt{2}\kappa}\right)-\frac{c_1+3\kappa^2}{3(c_1-\kappa^2)}\sqrt{\frac{c_1-3\kappa^2}{c_1-\kappa^2}} \nonumber\\
& & + \ln\!\left(\frac{\sqrt{c_2-\kappa^2}+\sqrt{c_2-3\kappa^2}}{\sqrt{2}\kappa}\right)-\frac{c_2+3\kappa^2}{3(c_2-\kappa^2)}\sqrt{\frac{c_2-3\kappa^2}{c_2-\kappa^2}}\Bigg].
\end{eqnarray}

Finally we determine the number of positive eigenvalues of the Hessian matrix $M$. Note that
\begin{equation*}
M=\frac{\partial^2 F}{\partial\lambda_i\partial\lambda_j}=\frac{\partial}{\partial c_l}\left(\frac{\partial F}{\partial \lambda_i}\right)\cdot \frac{\partial c_l}{\partial \lambda_j},
\end{equation*}
which results in
\begin{equation}\label{Mcalc}
M=\left ( \begin{matrix}
\frac{\partial F_1}{\partial c_1} & \frac{\partial F_1}{\partial c_2} \\
\frac{\partial F_2}{\partial c_1} & \frac{\partial F_2}{\partial c_2} \\
\end{matrix} \right ) \left ( \begin{matrix}
\frac{\partial \lambda_1}{\partial c_1} & \frac{\partial \lambda_1}{\partial c_2} \\
\frac{\partial \lambda_2}{\partial c_1} & \frac{\partial \lambda_2}{\partial c_2}\\
\end{matrix} \right )^{-1}.
\end{equation}
Using \eqref{5.8}, \eqref{5.9} and the quantities of $\lambda_1, \lambda_2$, it follows that the determinant of $M$ is
\begin{equation*}
\det M=-\frac{16}{\kappa^2}\sqrt{(c_1-\kappa^2)(c_2-\kappa^2)(c_1-3\kappa^2)(c_2-3\kappa^2)}<0.
\end{equation*}
This implies that $M$ has precisely one negative and one positive eigenvalue. The proof of Lemma \ref{le5.1} is completed.
\end{proof}

\section{Proof of Theorem \ref{th1.1}}
\label{6s}

In this short section, we recapitulate the results obtained previously and explain how they are sufficient for the proof of Theorem \ref{th1.1} according to \cite{MS93}.

In Section \ref{3s}, we showed that the 2-soliton solution is a critical point of a conserved functional. In Section \ref{4s}, we showed that the 2-soliton solution {is a non-degenerate critical point}, as defined in \cite[Section 2.1]{MS93} by showing the null space of the second variation  $\mathcal{L}$ given in \eqref{4.1} is two dimensional. In Section \ref{5s}, we use Proposition \ref{pro5.1} (corresponding to \cite[Lemma 2.1]{MS93}) to show that the 2-soliton solution is a constrained (non-isolated) minimizer, i.e. Condition \eqref{5.1} is satisfied. Finally, the eigenvalues of the matrix $M$ defined in \eqref{M} do not depend on the phases $y_{i0}$, $i=1,2$, of the 2-soliton as defined in Section \ref{2.1s}, as one can see from the expression for $M$ in \eqref{Mcalc}, and the expressions for $F_i$, $i=1,2$ in \eqref{5.8} and \eqref{5.9} and for $\lambda_i$, $i=1,2$ in \eqref{l1l2}. This implies that the constant $C$ of \cite[Lemma 2.3]{MS93} is ``independent of the particular representative of the family of critical points'' \cite[last paragraph of Section 2.3]{MS93}. Theorem \ref{th1.1} then follows directly from the results of Section 2.3 of \cite{MS93}.

{\color{black}{
\section{Discussion and Conclusions}

In this paper, we have established the stability of the 2-soliton solution of the mCH equation \eqref{1} as stated in Theorem \ref{th1.1}. To do so, we obtained in Lemma \ref{le3.1} a linear combination of the known conserved quantities \eqref{7} for which the two-soliton solution is a critical point. The spectral analysis of the Hessian is performed and the issue of the number of negative eigenvalues is solved by counting the number of zeroes of the Wronskian of the elements of the kernel (see Proposition \ref{pro4.1}).

The obvious next step is to obtain the stability of the $n$-soliton solutions. In the case of the KdV done in \cite{MS93}, the analysis relies on the fact that the Lagrange multipliers are the elementary symmetric functions of the velocities and expressions for the Wronskian can be obtained with the known formulas for the multi-soliton solutions. As for peakon equations, the stability of the multi-soliton for the Camassa-Holm equation is established with nonlocal conservation laws in \cite{Wang2022} and local ones in \cite{Zhang2025}. In both cases, the Lagrange multipliers are obtained to be symmetric function of the velocities (\cite[Proposition 2.1]{Wang2022} and \cite[Lemma 3.5]{Zhang2025}). In \cite{Zhang2025}, a distinction is made between the ``dependent'' conservation laws obtained through  a recursion relation, and the ``independent'' ones obtained by formulas written in terms of the scattering data (see \cite[Definition 2.1]{Zhang2025} taken from \cite[Equations (192) and (104)]{Constantin2007}). The independent conserved quantities, which are Fr\'echet differentiable,  are shown to be linear combinations of the dependent ones.

In our case, it is our understanding that the quantities given in \eqref{7} would correspond to the dependent conserved quantities, while the ones given in \eqref{3.1} are the independent ones. What makes our results challenging to generalize to the multi-soliton case is that we do not have a general formula that would give us the Lagrange multipliers as symmetric functions of the velocities. So, to extend our work  both the conserved quantities and the Lagrange mutipliers have to be obtained every time the number of solitons goes up by one. However, it has been shown in \cite{Zhang2025} for the Camassa-Holm equation that
the multipliers for the $n$-solitons given by Lemma 3.5 of \cite{Zhang2025} are indeed symmetric in the
quantities $1/c_i$, $i=1,...,n$. This inspires us further explore ways to appropriately combine the dependent conservation laws for the $n$-soliton case of the mCH equation \eqref{1}, and construct new independent
conservation laws, so that the corresponding Lagrange multipliers
become symmetric functions with respect to the wave velocity, which is the key point in the study of the stability of the $n$-soliton solutions, as mentioned above. We are thus working in extending our work to the general case of $n$-solitons.

}}

\vspace{5mm}

\noindent{\bf Acknowledgements}

{\color{black}{This research was partially supported by the NSFC (No.12571172)}} and the
Scientific Research Fund of Hunan Provincial Education Department
(No.21A0414). The research of S. Lafortune was supported by a Collaboration Grants for Mathematicians from the Simons Foundation (award \# 420847).
 {\color{black}{The research of Z. Liu was supported by the NSFC (No.12571188)}} and the Fundamental Research Funds for the Central
Universities, China University of Geosciences (Nos.CUGST2), and Guangdong Basic and Applied Basic Research
Foundation (Nos.2023A1515011679; 2024A1515012704).

\appendix
\renewcommand{\appendixname}{Appendix}
 \section{Conservation of the functional $E_4$} 
 \label{AA}
{{In this Appendix, we show explicitly that the quantity $E_4$ given in \eqref{7} is time-independent for the mCH equation \eqref{1}}}

It follows from \eqref{1} that
\eq{
 m_t&=-[(u^2-u_x^2)m_x+2m^2u_x],\\
 m_{tx}&=-[(u^2-u_x^2)m_{xx}+6u_xmm_x+2m^2(u-m)],\\
 m_{txx}&=-[(u^2-u_x^2)m_{xxx}+8u_xmm_{xx}+6m_x^2u_x+10m m_x(u-m)+2m^2(u_x-m_x)].
}{2.17}

A straightforward calculation shows that
\begin{eqnarray*}
\frac{d}{dt}E_4(m(t))&=& \frac{d}{dt}\int_{\mathbb{R}}\frac{m_{xx}^2}{2m^7}dx+\frac{d}{dt}\int_{\mathbb{R}}\frac{5m_x^2}{4m^7}dx+\frac{d}{dt}\int_{\mathbb{R}}-\frac{7m_x^4}{2m^9}dx+\frac{d}{dt}\int_{\mathbb{R}}\frac{1}{16m^5}dx \\
&=& I_1+I_2+I_3+I_4.
\end{eqnarray*}
In view of \eqref{2.17}, and integrating by parts several times leads to
\begin{eqnarray*}
I_1&=&  \frac{d}{dt}\int_{\mathbb{R}}\frac{m_{xx}^2}{2m^7}dx\\
&=& \int_{\mathbb{R}} \left(\frac{m_{xx}m_{xxt}}{m^7}-\frac{7m_{xx}^2m_t}{2m^8}\right)dx\\
&=& \int_{\mathbb{R}} \left(\frac{-14m_x^4u_x}{m^8}-\frac{28m_x^3(u-m)}{m^7}-\frac{5m_x^2u_x}{m^6}+\frac{2m_x(u-m)}{m^5}\right)dx.
\end{eqnarray*}
Similarly,
\begin{eqnarray*}
I_2&=&  \frac{d}{dt}\int_{\mathbb{R}}\frac{5m_x^2}{4m^7}dx\\
&=& \int_{\mathbb{R}}  \left(\frac{5m_x m_{tx}}{2m^7}-\frac{35m_x^2 m_t}{4m^8}\right)dx\\
&=&  \int_{\mathbb{R}} \left(\frac{5m_x^2u_x}{m^6}-\frac{5m_x(u-m)}{m^5}\right)dx,
\end{eqnarray*}
\begin{eqnarray*}
I_3&=&  \frac{d}{dt}\int_{\mathbb{R}}-\frac{7m_x^4}{2m^9}dx\\
&=& \int_{\mathbb{R}}  \left(\frac{-14m_x^3 m_{tx}}{m^9}+\frac{63m_x^4 m_t}{2m^{10}}\right)dx\\
&=&  \int_{\mathbb{R}} \left(\frac{14m_x^4 u_x}{m^8}+\frac{28m_x^3(u-m)}{m^7}\right)dx,
\end{eqnarray*}

\begin{eqnarray*}
I_4&=&  \frac{d}{dt}\int_{\mathbb{R}}\frac{1}{16m^5}dx\\
&=&  \frac{d}{dt}\int_{\mathbb{R}}\frac{-5m_t}{16m^6}dx \\
&=&  \int_{\mathbb{R}} \left(\frac{5[(u^2-u_x^2)m_x+2m^2u_x]}{16m^6}\right)dx.
\end{eqnarray*}

Thus, we can obtain
\begin{eqnarray*}
I_1+I_2+ I_3+I_4 &=& \int_{\mathbb{R}} \left(\frac{-3m_x(u-m)}{m^5}+\frac{5(u^2-u_x^2)m_x}{16m^6}+\frac{5u_x}{8m^4}\right)dx\\
&=& \int_{\mathbb{R}}  \left(\frac{3u}{4m^4}-\frac{1}{m^3}-\frac{u^2-u_x^2}{16m^5}\right)_xdx.\\
&=& 0.
\end{eqnarray*}

Consequently, it yields that $\displaystyle{\frac{d}{dt}E_4(m(t))=0}$.

\end{document}